\DeclareFontFamily{U}{wncy}{}
\DeclareFontShape{U}{wncy}{m}{n}{
<5>wncyr5
<6>wncyr6
<7>wncyr7
<8>wncyr8
<9>wncyr9
<10>wncyr10
<11>wncyr10
<12>wncyr6
<14>wncyr7
<17>wncyr8
<20>wncyr10
<25>wncyr10}{}
\DeclareMathAlphabet{\cyr}{U}{wncy}{m}{n}
\newcommand\kbar{{\overline{k}}}
\newcommand\Xbar{{\overline{X}}}
\newcommand\Sbar{{\overline{S}}}
\newcommand\Zbar{{\overline{Z}}}
\newcommand\Ubar{{\overline{U}}}
\newcommand\Ybar{{\overline{Y}}}
\newcommand\Tcbar{{\overline{T^c}}}
\newcommand\kappabar{{\overline{\kappa}}}
\newcommand\That{{\widehat{T}}}
\newcommand\Mhat{{\widehat{M}}}
\newcommand\dhat{{\widehat{d}}}
\renewcommand\tt{\mathbf{t}}
\newcommand\ww{\mathbf{w}}
\newcommand\xx{\mathbf{x}}
\newcommand\yy{\mathbf{y}}
\newcommand\zz{\mathbf{z}}
\renewcommand\AA{\mathbb{A}}
\newcommand\ZZ{\mathbb{Z}}
\newcommand\QQ{\mathbb{Q}}
\newcommand\GG{\mathbb{G}}
\newcommand\GmZ{\GG_{\mathrm{m}}}
\newcommand\Gm[1]{\GG_{\mathrm{m},#1}}
\newcommand\sm{\mathrm{sm}}
\newcommand\T{\mathcal{T}}
\newcommand\gc{\mathrm{cl}}
\DeclareMathOperator\Pic{Pic}
\DeclareMathOperator\Div{Div}
\DeclareMathOperator\divi{div}
\DeclareMathOperator\Gal{Gal}
\DeclareMathOperator\Hom{Hom}
\DeclareMathOperator\Br{Br}
\DeclareMathOperator\inv{inv}
\DeclareMathOperator\Cor{Cor}
\DeclareMathOperator\Res{Res}
\DeclareMathOperator\ord{ord}
\newtheorem{theorem}{Theorem}
\newtheorem{lemma}{Lemma}
\newtheorem{prop}{Proposition}
\newtheorem{cor}{Corollary}
\theoremstyle{definition}
\newtheorem{example}{Example}
\newtheorem*{ack}{Acknowledgements}
\newtheorem*{terminology}{Terminology}
\newtheorem{remark}{Remark}
\numberwithin{equation}{section}
\begin{document}

\title[Quadratic polynomials represented by norms]
{Universal torsors and values of quadratic polynomials represented by
  norms}

\author{Ulrich Derenthal}

\address{Institut f\"ur Algebra, Zahlentheorie und Diskrete Mathematik,
  Leibniz Universit\"at Hannover, Welfengarten 1, 30167 Hannover, Germany}

\email{derenthal@math.uni-hannover.de}

\author{Arne Smeets}

\address{Departement Wiskunde, KU Leuven, Celestijnenlaan 200B, 3001
  Leuven, Belgium \emph{and} D\'epartement de Math\'ematiques,
  B\^atiment 425, Universit\'e Paris-Sud 11, 91405 Orsay, France}

 \email{arnesmeets@gmail.com}

\author{Dasheng Wei}

\address{Academy of Mathematics and System Science, CAS, Beijing 100190,
  P.\ R.\ China}

\email{dshwei@amss.ac.cn}

\date{June 10, 2014}

\begin{abstract}
  Let $K/k$ be an extension of number fields, and let $P(t)$ be a
  quadratic polynomial over $k$. Let $X$ be the affine variety defined
  by $P(t) = N_{K/k}(\zz)$. We study the Hasse principle and weak
  approximation for $X$ in three cases. For $[K:k]=4$ and $P(t)$
  irreducible over $k$ and split in $K$, we prove the Hasse principle
  and weak approximation. For $k=\QQ$ with arbitrary $K$, we show that
  the Brauer-Manin obstruction to the Hasse principle and weak
  approximation is the only one. For $[K:k]=4$ and $P(t)$ irreducible
  over $k$, we determine the Brauer group of smooth proper models of
  $X$. In a case where it is non-trivial, we exhibit a counterexample
  to weak approximation.
\end{abstract}

\subjclass[2010]{14G05 (11D57, 14F22)}

%
%

\maketitle

\tableofcontents

\section{Introduction}

Let $K/k$ be an extension of number fields of degree $n$. When can values of a
polynomial $P(t)$ over $k$ be represented by norms of elements of $K$?  To
answer this natural question, we study solutions $(t,\zz) \in k \times K$ of the
equation
\begin{equation}\label{eq:variety}
  P(t)=N_{K/k}(\zz).
\end{equation}

This question is closely related to the study of the Hasse principle
and weak approximation (see the end of this introduction for a review
of this terminology) on a smooth proper model $X^c$ of the affine
hypersurface $X \subset \AA^1_k \times \AA^n_k$ with coordinates
$(t,\zz)=(t,z_1, \dots, z_n)$ defined by~(\ref{eq:variety}), via a
choice of a basis $\omega_1, \dots, \omega_n$ of $K$ over $k$, with
$N_{K/k}(\zz) = N_{K/k}(z_1\omega_1+\dots+z_n\omega_n)$.

Colliot-Th\'el\`ene conjectured that the Brauer--Manin obstruction to
weak approximation is the only one on $X^c$ (see \cite{MR2011747}).
This conjecture is known in the case where $P(t)$ is constant, thanks
to work of Sansuc \cite{MR631309}; if additionally $K/k$ is cyclic, it
is known that the Hasse principle (proved by Hasse himself
\cite[p.~150]{56.0165.03}) and weak approximation hold.  Other known
cases of Colliot-Th\'el\`ene's conjecture, in some cases leading to a
proof of the Hasse principle and weak approximation, include the class
of Ch\^atelet surfaces ($[K:k]=2$ and $\deg(P(t)) \le 4$)
\cite{MR870307}, \cite{MR876222}, a class of singular cubic
hypersurfaces ($[K:k]=3$ and $\deg(P(t)) \le 3$) \cite{MR988101} and
the case where $K/k$ is arbitrary and $P(t)$ is split over $k$ with at
most two distinct roots \cite{MR1961196}, \cite{MR2053456},
\cite{arXiv:1111.4089}. Finally, if one admits Schinzel's hypothesis,
then the conjecture is known for $K/k$ cyclic and $P(t)$ arbitrary
\cite{MR1285781}, \cite{MR1603908}. See for example \cite[Introduction]{MR2011747} and
\cite[Section~1]{arXiv:1109.0232} for a more detailed discussion of
these results and the difficulties of this problem.

\medskip

The obvious next challenge is the case where $P(t)$ is an irreducible
quadratic polynomial. Browning and Heath-Brown recently obtained a positive
result in this direction: they proved the Hasse principle and weak
approximation for $[K:k]=4$ and $\deg(P(t))=2$ with $P(t)$ irreducible over
$k$ and split in $K$, with $k=\QQ$. Their main result
\cite[Theorem~1]{arXiv:1109.0232} therefore answers a question raised in
\cite[Section~2]{MR2053456} positively in the case $k =
\QQ$. In this paper, we extend their work in several directions, using a
variety of different techniques.

We give a very short proof of this result for an arbitrary number
field~$k$.  It is independent of the work of Browning and Heath-Brown
and gives a simple geometric proof of their result. More
specifically, we show that an open subset of $X$ is a smooth fibration
in $3$-dimensional quadrics defined by quadratic forms of full rank
over a conic. The result follows by applying the theorem of
Hasse--Minkowski to the base and to the fiber.

\begin{theorem}\label{thm:degree_4}
  Let $P(t)$ be a quadratic polynomial that is irreducible over a number field
  $k$ and split in $K$ with $[K:k]=4$. Then the Hasse principle and weak
  approximation hold for the variety $X \subset \AA^5_k$ defined
  by~(\ref{eq:variety}).
\end{theorem}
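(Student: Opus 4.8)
The plan is to exploit the quadratic subfield cut out by $P$ and to reduce the degree-$4$ norm condition to two successive \emph{quadratic} (hence cyclic) norm conditions, for which the Hasse principle and weak approximation are classical. Since $P$ is irreducible of degree $2$ over $k$ and splits in $K$, its splitting field $L=k[t]/(P(t))$ is a quadratic extension of $k$ sitting inside $K$, with $[K:L]=2$; write $\alpha\in L$ for a root of $P$ and $a\in k^*$ for its leading coefficient, so that $P(t)=a\,N_{L/k}(t-\alpha)$ for all $t\in k$. Using $N_{K/k}=N_{L/k}\circ N_{K/L}$ and setting $w=N_{K/L}(\zz)$, I would first record the elementary equivalence: the equation $P(t)=N_{K/k}(\zz)$ is solvable in $(t,\zz)\in k\times K$ if and only if there exist $t\in k$ and $s\in L^*$ with
\[
 N_{L/k}(s)=a \quad\text{and}\quad s\,(t-\alpha)\in N_{K/L}(K^*),
\]
the correspondence being $s=w/(t-\alpha)$. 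The same computation over each completion $k_v$ shows that local solvability of $X$ at $v$ is equivalent to the analogous local statement.

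The first condition, $N_{L/k}(s)=a$, defines a smooth conic $C$ over $k$ (a torsor under the norm-one torus of the quadratic extension $L/k$); since $L/k$ is cyclic, $C$ satisfies the Hasse principle and weak approximation. The second condition is a norm condition for the quadratic, hence cyclic, extension $K/L$, so by the Hasse norm theorem $s(t-\alpha)\in N_{K/L}(K^*)$ as soon as $s(t-\alpha)$ is a local norm at every place $\mathfrak q$ of $L$. Only finitely many $\mathfrak q$ — those ramified or inert in $K/L$, together with the archimedean ones — impose a nontrivial condition; at all split places it is automatic. The strategy is therefore to choose $t\in k$ and a point $s\in C(k)$ so that $s(t-\alpha)$ becomes a local norm from $K_{\mathfrak q}$ for each of these finitely many $\mathfrak q$ simultaneously, and then to invoke the Hasse norm theorem for $K/L$ to upgrade this to a global norm; feeding the resulting $w=s(t-\alpha)$ back through $N_{K/L}$ produces the desired $\zz$. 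Weak approximation for $X$ follows along the same lines, by additionally prescribing $t$ and $s$ at the finitely many places where approximation is required.

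To make the simultaneous choice, I would start from the given local points of $X$: at each place $v$ of $k$ they provide $t_v\in k_v$ and $s_v\in (L\otimes_k k_v)^*$ with $N(s_v)=a$ and $s_v(t_v-\alpha)$ a local norm from $K$. I would then use weak approximation for the conic $C$ and for the affine line to select a single global pair $(t,s)$ with $t\in k$ and $s\in C(k)$ approximating $(t_v,s_v)$ closely at every place of $L$ lying above the finitely many relevant places of $k$. Since the property of being a local norm is open, $s(t-\alpha)$ is then a local norm at all these places of $L$, and trivially at the split ones, so the Hasse norm theorem applies.

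The main obstacle is precisely the patching in this last step, and it is here that the extra variable $t$ is essential: a single norm equation $N_{K/k}(\zz)=c$ can violate the Hasse principle when $K/k$ is biquadratic, and the point of the argument is that letting $t$ (and $s$) vary supplies exactly the degrees of freedom needed to satisfy the finitely many local norm conditions for $K/L$ at once. The delicate case to check is a place $v$ of $k$ that splits in $L$, where $L\otimes_k k_v=k_v\times k_v$ and the two places $\mathfrak q_1,\mathfrak q_2\mid v$ impose \emph{two} norm conditions linked by the single constraint $N_{L/k}(s)=a$; I would verify that the local solution $s_v$ coming from $X(k_v)$ already gives a consistent $k_v$-point of $C$ to approximate, so that weak approximation for $C$ prescribes both components at once without conflict. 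Granting this compatibility — which is guaranteed by the local solvability of $X$ itself — the Hasse norm theorems for the two quadratic extensions $L/k$ and $K/L$ close the argument.
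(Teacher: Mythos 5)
Your reduction to the conic $N_{L/k}(s)=a$ together with the residual condition $s(t-\alpha)\in N_{K/L}(K^\times)$ is exactly the paper's starting point (their conic $S$ and the map $(t,\zz)\mapsto (t-\sqrt a)^{-1}N_{K/L}(\zz)$). The gap is in how you propose to handle the residual norm condition. You assert that ``only finitely many $\mathfrak{q}$ --- those ramified or inert in $K/L$, together with the archimedean ones --- impose a nontrivial condition.'' This is false: the inert places of $K/L$ form an infinite set (of density $1/2$), and at an inert unramified place $\mathfrak{q}$ the local norm condition is that $v_{\mathfrak{q}}(s(t-\alpha))$ be \emph{even}. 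For a fixed $s$ this is automatic at the $\mathfrak{q}$ where $s(t-\alpha)$ is a unit, but as $t$ ranges over $k$ the element $t-\alpha$ acquires odd valuations at unpredictable primes of $L$ dividing $P(t)$, many of which will be inert in $K/L$. So the set of places where a genuine condition is imposed depends on $t$ and cannot be confined in advance to a finite set; approximating $(t,s)$ at a prescribed finite set of places and then invoking the Hasse norm theorem does not close the argument. Controlling the splitting behaviour in $K/L$ of the primes dividing $P(t)$ is precisely the hard core of this problem --- it is why Browning and Heath-Brown need sieve methods in the general case --- so this step cannot be waved through.

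The paper avoids the issue by never fixing $t$ before applying a local-global statement. For a fixed rational point $s$ of the conic, the equation $t-\alpha=s^{-1}N_{K/L}(\xx)$ in the variables $(t,\xx)\in k\times K$ is analysed by expanding $s^{-1}N_{K/L}(\xx)=f_0(\xx)+f_1(\xx)\sqrt a$ with $f_0,f_1$ quadratic forms over $k$: the equation becomes $f_1(\xx)=-1$ (with $t=f_0(\xx)$ recovered for free), and Lemma~\ref{lem:rank_4} shows $f_1$ has rank $4$, so the fibre is a smooth affine quadric of dimension $3$. Such quadrics satisfy the Hasse principle and weak approximation by Hasse--Minkowski, with no conditions at infinitely many places to check, and the fibration over the conic is then handled by \cite[Proposition~3.9]{MR870307}. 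In other words, the extra variable $t$ must be used to upgrade the residual condition from ``a fixed element is a norm from $K/L$'' to ``a rank-$4$ quadratic form over $k$ represents $-1$''; your version keeps it at the level of the Hasse norm theorem for individual elements, where the local conditions are not finite in number. If you replace your patching step by this quadric observation, your argument becomes the paper's proof.
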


If the ground field is $\QQ$, we can prove a much more general result
based on the analytic work of Browning and Heath-Brown in
\cite[Theorem~2]{arXiv:1109.0232} and the descent method of
Colliot-Th\'el\`ene and Sansuc:

\begin{theorem}\label{thm:Q}
  Let $k = \QQ$ and $K$ be any number field. Let $P(t) \in \QQ[t]$ be
  an arbitrary quadratic polynomial. Then the Brauer--Manin
  obstruction to the Hasse principle and weak approximation is the
  only obstruction on any smooth proper model of $X \subset
  \AA^{n+1}_\QQ$ defined by~(\ref{eq:variety}).
\end{theorem}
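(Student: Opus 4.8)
The plan is to combine the descent method of Colliot-Th\'el\`ene and Sansuc with the analytic input of Browning and Heath-Brown. First I would fix a smooth proper model $X^c$ of $X$. The projection $(t,\zz)\mapsto t$ exhibits $X$ as fibred over $\AA^1_\QQ$ with generic fibre a torsor under the norm-one torus $R^1_{K/\QQ}\GmZ$; since norm-one tori are rational, $X$ and hence $X^c$ are geometrically rational. In particular $\overline{\QQ}[X^c]^\times=\overline{\QQ}^\times$, the module $\Pic(\overline{X^c})$ is finitely generated and torsion-free, and $\Br(\overline{X^c})=0$, so universal torsors exist and the descent theorem of Colliot-Th\'el\`ene and Sansuc applies. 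Writing $T=\Hom(\Pic(\overline{X^c}),\GmZ)$ for the N\'eron--Severi torus and $\pi\colon\T\to X^c$ for a universal torsor, one has
\[
  X^c(\mathbf{A}_\QQ)^{\Br}=\bigcup_{\sigma}\pi^\sigma\bigl(\T^\sigma(\mathbf{A}_\QQ)\bigr),
\]
the union running over the everywhere locally soluble twists $\T^\sigma$. Hence it suffices to establish the Hasse principle and weak approximation for every such twist.

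Next I would make the universal torsors explicit. Over $\overline{\QQ}$ the defining equation reads $c\,(t-\theta)(t-\theta')=\ell_1(\zz)\cdots\ell_n(\zz)$, where $\theta,\theta'$ are the roots of $P$ and the $\ell_i$ are the conjugates of the coordinate form of $K$; one reads off $\Pic(\overline{X^c})$ as a $\Gal(\overline{\QQ}/\QQ)$-module from the divisors $t=\theta$, $t=\theta'$, $\ell_i=0$ and the divisor at infinity, with Galois permuting the $\ell_i$ through $K$ and the pair $\{\theta,\theta'\}$ through the \'etale $\QQ$-algebra $k'$ generated by the roots of $P$. Carrying out the descent splits these factors by auxiliary variables, and I expect each twisted universal torsor to be an open subvariety of a variety defined by an equation of the shape
\[
  N_{K/\QQ}(\zz)=\lambda\,N_{k'/\QQ}(\mathbf{u}),
\]
where the entries of $\mathbf{u}$ are linear in $t$ and $\lambda$ encodes the twist --- that is, a ``norm form equals a product of linear polynomials'' equation of exactly the type treated analytically in \cite[Theorem~2]{arXiv:1109.0232}. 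A genuine case distinction is needed here, according as $P$ is irreducible, split, or has a repeated root over $\QQ$, and according to the decomposition type of $K$.

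With the torsors in hand I would invoke Browning and Heath-Brown's Theorem~2 to deduce the Hasse principle and weak approximation for each $\T^\sigma$. Applying it to every everywhere locally soluble twist and feeding the result back into the descent formula yields $X^c(\mathbf{A}_\QQ)^{\Br}\subseteq\overline{X^c(\QQ)}$, which is precisely the assertion that the Brauer--Manin obstruction is the only one for both the Hasse principle and weak approximation. Since the Brauer--Manin set and the closure of the rational points are birational invariants of smooth proper varieties, the conclusion then holds on \emph{any} smooth proper model, as required.

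The hard part will be the explicit descent computation and its interface with the analytic theorem. One must compute $\Pic(\overline{X^c})$ correctly for arbitrary $K$ and arbitrary quadratic $P$, produce the universal torsor equations in each of the above cases, and --- crucially --- check that after twisting the equations retain exactly the shape (the correct field $k'$, the correct product of linear forms, and an admissible range for the parameter $\lambda$) demanded by the hypotheses of \cite[Theorem~2]{arXiv:1109.0232}. Matching the local conditions cut out by the Brauer group with the everywhere-local-solubility of the twists, so that no locally soluble twist falls outside the scope of the analytic input, is where the real difficulty lies; by comparison the geometric rationality and the formal descent formalism are routine.
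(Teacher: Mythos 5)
Your overall strategy --- descend to universal torsors and feed them into the analytic theorem of Browning and Heath-Brown --- is exactly the strategy of the paper, and the descent formalism you describe (including the observation that the locally soluble twists of a universal torsor are again universal torsors, so it suffices to treat all of them at once) is sound modulo one small point: the existence of universal torsors does not follow from geometric rationality together with $\Br(\overline{X^c})=0$; one needs non-emptiness of the Brauer--Manin set, and the paper verifies this concretely (Lemma~\ref{lem:torsor_existence}) by showing that the splitting of $1\to\kbar^\times\to\kbar[U]^\times\to\kbar[U]^\times/\kbar^\times\to1$ amounts to solving $cN_{L/\QQ}(\rho)=N_{K/\QQ}(\xi)$, a homogeneous space of a torus to which Sansuc's theorem applies.

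The genuine gap is in the step you yourself flag as the hard part: your guess for the shape of the universal torsors, and hence for the interface with the analytic input, is wrong. An equation $N_{K/\QQ}(\zz)=\lambda\,N_{k'/\QQ}(\mathbf{u})$ with $\mathbf{u}$ linear in $t$ is the ``norm form equals product of linear polynomials'' shape relevant when $P$ splits over $\QQ$ (Heath-Brown--Skorobogatov), and it is \emph{not} what \cite[Theorem~2]{arXiv:1109.0232} treats. That theorem concerns the specific variety \cite[(1.5)]{arXiv:1109.0232}, namely $N_{K/\QQ}(\ww)(t-\sqrt a)=\rho N_{K/L}(\yy)$, an equation over the quadratic field $L=\QQ(\sqrt a)$ attached to an \emph{irreducible} $P$. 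The correct torsor computation (Proposition~\ref{prop:torsor}) yields $\T_U:\ t-\eta=\rho N_{A/L}(\zz)$ over $L$ with $A=L\otimes_\QQ K$, and one then needs a further, non-obvious identification with (1.5): when $L\subset K$ one has $A\cong K\times K$ and the multiplicative substitution $\ww=\xx_2^{-1}$, $\yy=\xx_1\xx_2^{-1}$ produces (1.5) exactly; when $L\not\subset K$ one has $A=F=LK$ and $\T_U$ is recovered only as a direct factor, $Y'\cong\T_U\times R_{F/\QQ}(\Gm{F})$, of the Browning--Heath-Brown variety with $K$ replaced by $F$. Without this identification (or a substitute for it) the analytic theorem cannot be invoked, so the proof as written does not close. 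Note also that the split cases of $P$ are not reached by \cite[Theorem~2]{arXiv:1109.0232} at all: the paper disposes of them separately, quoting \cite[Theorem~1.1]{MR1961196} for two distinct rational roots and \cite{MR631309} for a double root (where $U$ is a homogeneous space of a torus).
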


Let $X$ be the variety defined by equation~(\ref{eq:variety}) and let $U
\subset X$ be the open subvariety given by $P(t) \neq 0$. We prove that the
variety $Y$ defined by \cite[(1.5)]{arXiv:1109.0232} is the restriction $\T_U$
of a universal torsor $\T$ over $X$ to $U$, or a product of $\T_U$ with a
quasi-split torus. For the variety $Y$, \cite[Theorem~2]{arXiv:1109.0232}
proves weak approximation using sieve methods from analytic number theory,
inspired by work of Fouvry and Iwaniec \cite{MR1438827}.  While one step in
Browning's and Heath-Brown's deduction of \cite[Theorem~1]{arXiv:1109.0232}
from \cite[Theorem~2]{arXiv:1109.0232} leads to their restriction to
$[K:\QQ]=4$, the combination of their analytic work with descent theory
gives our more general Theorem~\ref{thm:Q}.
We also generalize Theorem~\ref{thm:Q} to a large class of multivariate
polynomials $P(t_1,\dots,t_\ell) \in \QQ[t_1,\dots,t_\ell]$.

Note that the descent argument in our proof of Theorem~\ref{thm:Q} follows the
proof of \cite[Theorem~2.2]{MR1961196}. As pointed out by the referee, the
more sophisticated method of \cite{MR2053456} would allow the treatment of
smooth proper models of $X \subset \AA^{n+1}_\QQ$ defined by the more general
equation
\begin{equation*}
  cP(t)^n=N_{K/\QQ}(\zz)
\end{equation*}
for any $n \ge 1$, with $c \in \QQ^\times$, quadratic $P(t) \in \QQ[t]$ and
any number field $K$. See also \cite{arXiv:1210.5727}.

Our Theorem~\ref{thm:Q} then leads naturally to the question in which
cases there may be a Brauer--Manin obstruction.  We give a complete
analysis for $[K:k]=4$ and $P(t)$ irreducible over $k=\QQ$. If $P(t)$
is split in $K$, there is no Brauer--Manin obstruction to weak
approximation by \cite[Theorem~1]{arXiv:1109.0232} or our
Theorem~\ref{thm:degree_4}, so that it is not very surprising that the
Brauer group of a smooth proper model of $X$ is trivial in this case.
Otherwise, the Brauer group is sometimes non-trivial:

\begin{theorem}\label{thm:brauer_short}
  Let $P(t)$ be a quadratic polynomial that is irreducible over a
  number field $k$ and has splitting field $L$, and let $[K:k]=4$. Let
  $X^c$ be a smooth proper model of $X \subset \AA^5_k$ defined
  by~(\ref{eq:variety}).

  The Brauer group $\Br(X^c)$ contains non-constant elements if and
  only if the extension $K/k$ is not Galois, $P(t)$ is not split in
  $K$ and the extension $K\cdot L/k$ is Galois with $\Gal(K\cdot L/L)
  \cong \ZZ/4\ZZ$.
\end{theorem}

In Section~\ref{sec:brauer}, we prove a more detailed version of
Theorem~\ref{thm:brauer_short} that includes an explicit description
of the Brauer group in the non-trivial case
(Theorem~\ref{thm:brauer}). The calculations are quite involved and it
seems unlikely that similar detailed results can be obtained for
higher degree extensions.

As a corollary, we get the validity of the Hasse principle and weak
approximation in some new cases when the Brauer group is trivial
(Corollary~\ref{cor:quartic}).  In Section~\ref{sec:counterexample},
we use our explicit description of the Brauer group in the case where
it is non-trivial to produce a counterexample to weak approximation
(explained by the Brauer-Manin obstruction).

\begin{terminology}
  For an algebraic variety $Z$ defined over a number field $k$ with
  algebraic closure $\kbar$, one says that the Hasse principle holds
  if $\prod_{v \in \Omega_k} Z(k_v) \ne \emptyset$ (where $\Omega_k$
  is the set of places of $k$ and $k_v$ is the completion of $k$ at
  $v$) implies $Z(k) \ne \emptyset$. One says that weak approximation
  holds if $Z(k)$ is dense in $\prod_{v \in \Omega_k} Z(k_v)$ with the
  product topology, via the diagonal embedding.

  If $Z$ is smooth and proper, one says that the Brauer--Manin
  obstruction to the Hasse principle is the only one if $(\prod_{v \in
    \Omega_k} Z(k_v))^{\Br(Z)} \ne \emptyset$ implies that $Z(k) \ne
  \emptyset$, and that the Brauer--Manin obstruction to weak
  approximation is the only one if $Z(k)$ is dense in $(\prod_{v \in
    \Omega_k} Z(k_v))^{\Br(Z)}$. Here $(\prod_{v \in \Omega_k}
  Z(k_v))^{\Br(Z)}$ is the set of all $(z_v) \in \prod_{v \in
    \Omega_k} Z(k_v)$ satisfying $\sum_{v \in \Omega_k} \inv_v(A(z_v))
  = 0$ for each $A$ in the Brauer group $\Br(Z) =
  H^2_\text{\'et}(Z,\GmZ)$ of $Z$, where the map $\inv_v : \Br(k_v)
  \to \QQ/\ZZ$ is the invariant map from local class field theory.

  The subgroup $\Br_0(Z)$ of constant elements in the Brauer group is
  the image of the natural map $\Br(k) \to \Br(Z)$. The algebraic
  Brauer group $\Br_1(Z)$ is the kernel of the natural map $\Br(Z) \to
  \Br(\Zbar)$, where $\Zbar = Z \times_k \kbar$.
\end{terminology}

\begin{ack}
  The first named author was supported by grant DE 1646/2--1 of the
  Deutsche Forschungsgemeinschaft and grant 200021\underline{\
    \,}124737/1 of the Schweizer Nationalfonds. The second named
  author was supported by a PhD fellowship of the Research Foundation
  -- Flanders (FWO). The third named author was supported by National
  Key Basic Research Program of China (Grant No. 2013CB834202) and
  National Natural Science Foundation of China (Grant Nos. 11371210 and
  11321101). This collaboration was supported by the Center for
  Advanced Studies of LMU M\"unchen. We thank T.~D.~Browning,
  J.\hbox{-}L.~Colliot-Th\'el\`ene, C.~Demarche and B.~Kunyavski{\u\i}
  for useful discussions and remarks. Finally, we thank the referee
  for his suggestions for improvement.
\end{ack}

\section{Quadratic polynomials represented by a quartic norm}

In this section, we give a very short proof of
Theorem~\ref{thm:degree_4} that is independent of the work of Browning
and Heath-Brown \cite{arXiv:1109.0232} and generalizes it from $\QQ$
to an arbitrary number field $k$.

\begin{proof}[Proof of Theorem~\ref{thm:degree_4}]
  Using a change of variables if necessary, we can assume that the
  irreducible quadratic polynomial $P(t) \in k[t]$ that is split in
  $K$ (with $[K:k]=4$) has the form $P(t)=c(t^2-a)$, with $c \in
  k^\times$, where $a \in k^\times$ is not a square and $\sqrt a \in
  K$. Write $L = k(\sqrt a) \subset K$.

  Let $U = \{(t,\zz) : P(t) \ne 0\} \subset X$. Let $S\subset \AA^2_k$ be the
  conic defined by the affine equation $N_{L/k}(\ww)=c$ and let $p: U \to S$
  be the morphism defined by
  \begin{equation*}
    (t,\zz) \mapsto (t-\sqrt a)^{-1} N_{K/L}(\zz).
  \end{equation*}

  Theorem~\ref{thm:degree_4} holds by \cite[Proposition~3.9]{MR870307},
  which is based on the theorem of Hasse--Minkowski,
  since this is a smooth fibration in quadrics of dimension $3$
  over a base satisfying weak approximation.

  Indeed, the base $S$ is a conic, where weak approximation holds, and
  the statement regarding the fibers can be checked over an algebraic
  closure $\kbar$ of $k$. Here, we have
  \begin{equation*}
    \Ubar = U \times_k \kbar \cong \{c(t^2-a)=u_1u_2u_3u_4\} \subset \AA^5_\kbar, \
    \Sbar = S \times_k \kbar \cong \{w_1w_2=c\} \subset \AA^2_\kbar,
  \end{equation*}
  with $p$ mapping $(t,u_1,\dots,u_4) \in \Ubar$ to $((t-\sqrt
  a)^{-1}u_1u_2, (t+\sqrt a)^{-1}u_3u_4) \in \Sbar$. Hence the fiber
  over $(w_1,w_2) \in \Sbar$ is
  \begin{equation*}
    \{t-\sqrt a=w_1^{-1}u_1u_2,\ t+\sqrt a=w_2^{-1}u_3u_4\}\subset \AA^5_\kbar.
  \end{equation*}
  Eliminating $t$ gives
  \begin{equation*}
    \{-2\sqrt a=w_1^{-1}u_1u_2-w_2^{-1}u_3u_4\}\subset \AA^4_\kbar,
  \end{equation*}
  where the quadratic form in $u_1, \dots, u_4$ clearly has rank $4$.
  Hence the fiber is a smooth quadric of dimension $3$.
\end{proof}

\begin{remark}
  The analog of Theorem~\ref{thm:degree_4} holds for global fields of positive
  characteristic different from $2$ as well. Indeed, it is not hard to see
  that our arguments and the proof of \cite[Proposition~3.9]{MR870307} remain
  valid for such fields.
\end{remark}

\section{Universal torsors}\label{sec:torsor}

The basic strategy to prove Theorem~\ref{thm:Q} is based on the
following result, which reduces the problem of the Hasse principle and
weak approximation on a variety to the same questions on its universal
torsors, where we have no Brauer--Manin obstructions.  This kind of
result has been proved essentially by Colliot-Th\'el\`ene and Sansuc
in their seminal paper \cite{MR89f:11082}. However, they developed
their theory under the simplifying assumption that the varieties
involved are proper. Skorobogatov developed a variant under less
stringent assumptions in \cite{MR1666779}. Descent on open varieties
also features in \cite{MR1744112} and \cite{MR2011747}.  We will use
the following variant:

\begin{prop}\label{lem:torsor}
  Let $Z$ be a smooth, geometrically rational variety over a number
  field $k$ with algebraic closure $\kbar$. Let $\Zbar = Z \times_k
  \kbar$. Assume furthermore that $\overline{k}[Z]^\times =
  \overline{k}^\times$, that $\Pic(\Zbar)$ is free of finite rank,
  that universal $Z$-torsors exist and that any universal $Z$-torsor
  satisfies weak approximation.  Then the Brauer--Manin obstruction to
  the Hasse principle and weak approximation is the only one for any
  smooth proper model $Z^c$ of~$Z$.
\end{prop}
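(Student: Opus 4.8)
The plan is to deduce the statement from the descent theory of Colliot-Th\'el\`ene and Sansuc, which trades the Brauer--Manin set of $Z^c$ for weak approximation on its universal torsors, and then to feed in the hypothesis about the torsors of the open subvariety $Z$. First I would record the geometric input. Since $Z$, and hence $Z^c$, is smooth and geometrically rational, the geometric Brauer group $\Br(\Zbar^c)$ vanishes, so $\Br(Z^c)=\Br_1(Z^c)$ and $\Br(Z^c)/\Br(k)\cong H^1(k,\Pic(\Zbar^c))$ is finite; thus the Brauer--Manin pairing on $Z^c$ is cut out by finitely many locally constant conditions. Because $\kbar[Z]^\times=\kbar^\times$ and $Z$ has universal torsors, the elementary obstruction of $Z^c$ agrees with that of $Z$ and vanishes, so $Z^c$ also has universal torsors; its N\'eron--Severi torus $T^c$ has character group $\Pic(\Zbar^c)$.

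Next I would apply the descent theorem to the smooth proper variety $Z^c$. Fixing a universal torsor $\mathcal{S}\to Z^c$ under $T^c$, the twists $\mathcal{S}^\tau$ by $[\tau]\in H^1(k,T^c)$ are all the universal torsors, and descent yields the covering $Z^c(\mathbf{A}_k)^{\Br(Z^c)}=\bigcup_\tau \mathrm{pr}^\tau(\mathcal{S}^\tau(\mathbf{A}_k))$, a finite union over those $\tau$ for which $\mathcal{S}^\tau$ has adelic points. It then suffices to prove that each such $\mathcal{S}^\tau$ satisfies weak approximation as a variety: for weak approximation on $Z^c$ one lifts a Brauer--Manin point to some $\mathcal{S}^\tau(\mathbf{A}_k)$, approximates it by a rational point of $\mathcal{S}^\tau$, and projects back down by continuity of $\mathrm{pr}^\tau$; for the Hasse principle a nonempty Brauer--Manin set forces some $\mathcal{S}^\tau(\mathbf{A}_k)\neq\emptyset$, and weak approximation on that torsor then produces a rational point, which projects to a point of $Z^c(k)$.

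The core step is to verify weak approximation for each $\mathcal{S}^\tau$ using only the hypothesis on the torsors of $Z$. Here I would use two reductions. First, weak approximation passes up from any open dense subvariety of a smooth variety, since local points can be moved into the open part by smoothness; hence it is enough to treat the restriction $\mathcal{S}^\tau|_Z$. Second, the boundary $\Zbar^c\setminus\Zbar$ is a union of divisors permuted by $\Gal(\kbar/k)$, so the kernel $M$ of $\Pic(\Zbar^c)\twoheadrightarrow\Pic(\Zbar)$ is a permutation module, and the units condition makes $0\to M\to\Pic(\Zbar^c)\to\Pic(\Zbar)\to 0$ exact. Dualising gives $1\to T\to T^c\to T_M\to 1$ with $T$ the N\'eron--Severi torus of $Z$ and $T_M$ quasi-split. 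The pushout of $\mathcal{S}^\tau|_Z$ along $T^c\to T_M$ is a $T_M$-torsor of type zero over $Z$, hence trivial, as $H^1(k,T_M)=0$ by Hilbert~90 and Shapiro; so the structure group reduces to $T$, exhibiting $\mathcal{S}^\tau|_Z$, up to a quasi-split factor $T_M$, as a universal torsor $\T^\sigma$ of $Z$. Weak approximation for $\mathcal{S}^\tau|_Z$ thus reduces to weak approximation for $\T^\sigma$ and for $T_M$: the latter holds because $T_M$ is a product of restrictions $\mathrm{Res}_{k'/k}\GmZ$, and the former follows, again by the open-dense reduction, from the hypothesis that $\T^\sigma_U$ satisfies weak approximation.

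The main obstacle I expect is the precise descent bookkeeping rather than any single hard estimate: establishing the covering of $Z^c(\mathbf{A}_k)^{\Br(Z^c)}$ by twisted universal torsors in a form that interfaces with the open variety $Z$ (for which Skorobogatov's variant is designed), and pinning down the isomorphism type of $\mathcal{S}^\tau|_Z$ as a universal torsor of $Z$ times the quasi-split torus $T_M$. Care is also needed to check that perturbing adelic points into the preimage of $U$ preserves the adelic and Brauer--Manin conditions, which it does because the local invariants $\inv_v$ are locally constant and almost all of them vanish. Once this identification is in place the deduction is formal.
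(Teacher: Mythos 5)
Your proposal reaches the statement by a genuinely different route from the paper. You apply the Colliot-Th\'el\`ene--Sansuc descent theorem to the \emph{proper} model $Z^c$ (after checking $\Br(\overline{Z^c})=0$ and that $Z^c$ inherits universal torsors from $Z$ via the elementary obstruction), obtain the covering of $Z^c(\AA_k)^{\Br(Z^c)}$ by images of the twisted universal torsors $\mathcal{S}^\tau$ of $Z^c$, and then try to transfer weak approximation from the hypothesis on torsors of $Z$ to the $\mathcal{S}^\tau$ via the exact sequence $0\to M\to\Pic(\overline{Z^c})\to\Pic(\Zbar)\to 0$, with $M$ the permutation module of boundary divisors. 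The paper never descends on $Z^c$: it first invokes \cite[Proposition~1.1]{MR1744112} to replace $(\prod_v Z^c(k_v))^{\Br(Z^c)}$ by the dense subset $Z(\AA_k)^{\Br_1(Z)}$, then applies Skorobogatov's descent theorem for \emph{open} varieties \cite[Theorem~3]{MR1666779} directly to $Z$, lifting the adelic point to a universal $Z$-torsor $\T$ and perturbing into $\T_U$ by the implicit function theorem. The paper's route is shorter precisely because it avoids any comparison between universal torsors of $Z^c$ and those of $Z$, which is the delicate point of your argument; what your route buys in exchange is that it stays within the classical proper descent framework.

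That comparison step is also where your write-up has a gap. The pushout argument shows that $\mathcal{S}^\tau|_Z/T\cong Z\times T_M$, i.e.\ that $\mathcal{S}^\tau|_Z$ is a $T$-torsor over $Z\times T_M$ whose restriction over $Z\times\{1\}$ is a universal torsor of $Z$; but the asserted product decomposition $\mathcal{S}^\tau|_Z\cong\T^\sigma\times T_M$ would require the extension $1\to T\to T^c\to T_M\to 1$ of tori to split, equivalently a $\Gamma_k$-equivariant retraction $\Pic(\overline{Z^c})\to M$, and $\mathrm{Ext}^1_{\ZZ[\Gamma_k]}(\Pic(\Zbar),M)$ need not vanish even though $M$ is a permutation module. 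The step is repairable without the splitting: the fibres of $\mathcal{S}^\tau|_Z\to T_M$ over rational points of $T_M$ are all $T$-torsors over $Z$ of universal type, hence twists of $\T$ covered by the ``any universal $Z$-torsor'' hypothesis, and weak approximation on the quasi-trivial torus $T_M$ lets one first pin down the $T_M$-coordinate before approximating in the fibre. But as stated the isomorphism is not justified, and this is exactly the bookkeeping that the paper's choice of route makes unnecessary.
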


The condition $\kbar[Z]^\times = \kbar^\times$ means that the only invertible
regular functions on $Z$ are the constant ones.
The proof of this proposition is straightforward; the key
observation is the fact that $Z(\AA_k)^{\Br_1(Z)}$ is dense in
$(\prod_{v}Z^c(k_v))^{\Br(Z^c)}$ by
\cite[Proposition~1.1]{MR1744112} (note that $\Br(\overline{Z^c}) =
 0$), and the result then follows from descent theory and the implicit function theorem.

\medskip

The main result of this section is concerned with the existence of
universal torsors \cite[(2.0.4)]{MR89f:11082} over $X$ as
in~(\ref{eq:variety}) and their local description.

Let us first recall some more definitions. If $k$ is a field and if $A$ is an
\'etale $k$-algebra, then the $k$-variety $R_{A/k}(\Gm{A})$ is defined via its
functor of points: take $R_{A/k}(\Gm{A})(B) = (A \otimes_k B)^\times$
functorially for every $k$-algebra $B$. The norm map $N_{A/k}$ is defined as
in \cite[\S 12.2]{MR0098114}. We denote the absolute Galois group of $k$ by
$\Gamma_k$.

\begin{prop}\label{prop:torsor}
  Let $K/k$ be an extension of fields of degree $n$. Let $P(t)$ be an
  irreducible separable polynomial of degree $r$ over $k$.

  The variety $X \subset \AA^{n+1}_k$ defined by~(\ref{eq:variety}) is smooth
  and geometrically integral, with $\Pic(\Xbar)$ free of finite rank and
  $\kbar[X]^\times = \kbar^\times$. Let $U$ be the open subset of $X$ defined
  by $P(t) \ne 0$. Then $\Pic(\Ubar) = 0$.

  Let $c \in k^\times$ be the leading coefficient of $P(t)$, let $L$ be the
  field $k[t]/(P(t))$ and let $\eta$ be the class of $t$ in $L$. Let $A = L
  \otimes_k K$. For any universal torsor $\T$ over $X$, there exists
  a solution $(\rho,\xi) \in L^\times \times K^\times$ of the equation 
  \begin{equation}\label{eq:splitting_condition}
    c N_{L/k}(\rho) = N_{K/k}(\xi)
  \end{equation}
  such that $\T_U$ (its restriction to
  $U$) is isomorphic to the subvariety of $\AA^1_k \times
  R_{A/k}(\Gm{A})$ (with coordinates $(t,\zz)$) given by the
  equation
  \begin{equation}\label{eq:torsor_equation_general}
    t - \eta = \rho N_{A/L}(\zz).
  \end{equation}
  Conversely, for any solution $(\rho,\xi) \in L^\times \times
  K^\times$ of \eqref{eq:splitting_condition}, there is a universal
  torsor $\T$ over $X$ such that $\T_U$ has such a description.
\end{prop}

Using only the basic definitions, it is easy to see that one can specialize
equation (\ref{eq:torsor_equation_general}) as follows in the two ``extreme''
cases:
\begin{itemize}
\item[(a)] If $P(t)$ splits completely in $K$, then $\T_U$ is
  isomorphic to the sub-variety of $\AA^1_k \times (R_{K/k}(\Gm{K}))^r$ (with
  coordinates $(t,\xx_1,\dots,\xx_r)$) given by the equation
    \begin{equation}
      t-\eta = \rho\prod_{i=1}^r \sigma_i^{-1}(N_{K/\sigma_i(L)}(\xx_i)) \label{eq:torsor_equation}
    \end{equation}
    where $\sigma_1, \dots, \sigma_r$ is a set of representatives of
    $\Gamma_k/\Gamma_L$.
  \item[(b)] If $P(t)$ remains irreducible in $K$, then $\T_U$ is
    isomorphic to the subvariety of $\AA^1_k \times R_{F/k}(\Gm{F})$ (with
    coordinates $(t,\xx)$) given by the equation
    \begin{equation}
      t-\eta = \rho N_{F/L}(\xx) \label{eq:torsor_equation_2} \end{equation}
    where $F = L \cdot K$.
\end{itemize}

\medskip

The proof of Proposition~\ref{prop:torsor} is an adaptation of
\cite[Theorem~2.2]{MR1961196} and will occupy most of the remainder of
this section.  The $\kbar$-variety $\Xbar$ can be described by an
equation of the form
\begin{equation}\label{eq:variety_kbar}
  c\prod_{i=1}^r(t-\eta_i)=u_1\cdots u_n
\end{equation}
where $\eta_1, \dots, \eta_r$ are the embeddings of $\eta$ in
$\kbar$. We note that $X$ is smooth because $P(t)$ is
separable. Consider the morphism $p: X \to \AA^1_k$ given by $(t,\xx)
\mapsto t$. Over $\kbar$, it has precisely $r$ reducible fibers $X_i$,
for $i=1, \dots, r$, over $t = \eta_i$. Each of these has $n$
irreducible components $D_{i,j} = \{t=\eta_i,\ u_j=0\}$ for $j=1,
\dots, n$. Let $U_0$ be the open subset of $\AA^1_k$ where $P(t) \ne
0$ and let $U = p^{-1}(U_0) \subset X$. We have
\begin{equation*}
  \Ubar = U \times_k \kbar \cong
  (\AA^1_\kbar \setminus\{\eta_1, \dots, \eta_r\}) \times \Gm{\kbar}^{n-1},
\end{equation*}
so that $\Pic(\Ubar) = 0$.

We have $\kbar[X]^\times = \kbar^\times$.  Indeed, the generic fiber of $\Xbar
\to \AA^1_\kbar$ is $\Gm{\kbar(t)}^{n-1}$. Therefore, any $f \in
\kbar[X]^\times$ has the form $f=g(t)u_1^{m_1}\cdots u_n^{m_n}$ with $g \in
k(t)$ and $m_1, \dots, m_n \in \ZZ$. If $g(t)$ has a root or pole in some $t_0
\notin \{\eta_1, \dots, \eta_r\}$, then $f$ or $f^{-1}$ is not regular in a
point on $p^{-1}(t_0)$. Otherwise, we have
\begin{equation*}
g(t)=c'\prod_{i=1}^r(t-\eta_i)^{e_i}
\end{equation*}
for some $c' \in \kbar^\times$
and $e_1, \dots, e_r \in \ZZ$. Then
\begin{equation*}
  \divi(f)=\sum_{i=1}^r \sum_{j=1}^n (e_i + m_j)D_{i,j},
\end{equation*}
so $f \in \kbar[X]^\times$ if and only if
$e_1=\dots=e_r=-m_1=\dots=-m_n$. By~(\ref{eq:variety_kbar}), this is
equivalent to saying that $f$ is a constant in $\kbar^\times$.

By descent theory \cite[Corollary~2.3.4]{MR89f:11082}, universal torsors over
$X$ exist if and only if the exact sequence of $\Gamma_k$-modules
\begin{equation}\label{eq:exact}
  1 \to \kbar^\times \to \kbar[U]^\times \to \kbar[U]^\times/\kbar^\times \to 1
\end{equation}
is split.

It is easy to see that the abelian group $\kbar[U]^\times/\kbar^\times$ is
free of rank $r+n-1$, generated by the classes of the functions $t-\eta_1,
\dots, t-\eta_r, u_1, \dots, u_n$ with an obvious $\Gamma_k$-action and the
relation
\begin{equation}\label{eq:relation}
  \sum_{i=1}^r [t-\eta_i] - \sum_{j=1}^n [u_j] = 0
\end{equation}
because of the equation defining $X$.

The exact sequence~(\ref{eq:exact}) is split if and only if the
classes can be lifted to $\kbar[U]^\times$ in a $\Gamma_k$-equivariant
way, via a map
\begin{equation}\label{eq:splitting}
  \phi: \kbar[U]^\times/\kbar^\times \to \kbar[U]^\times, \quad
  [t-\eta] \mapsto \rho^{-1}(t-\eta),\quad [u_1] \mapsto \xi^{-1} u_1
\end{equation}
where $\rho \in L^\times$ and $\xi \in K^\times$. Because of the
unique relation~\eqref{eq:relation}, the pair $(\rho,\xi) \in L^\times \times
K^\times$ defines such a splitting if and only if
it satisfies \eqref{eq:splitting_condition}.

We now want to apply \cite[Theorem~2.3.1, Corollary~2.3.4]{MR89f:11082} for
the local description of universal torsors over $X$. We will describe a
morphism of tori $d: M \to T$ such that its dual map of characters fits into
the following commutative diagram of $\Gamma_k$-equivariant homomorphisms.
\begin{equation*}
  \begin{CD}
    0 @>>> \That @>\dhat>> \Mhat @>>> \Pic(\Xbar) @>>> 0\\
    @. @V\sim V i V @V\sim V j V @| @.\\
    1 @>>> \kbar[U]^\times/\kbar^\times @>\divi{} >> \Div_{\Xbar\setminus
      \Ubar}(\Xbar) @>>> \Pic(\Xbar) @>>> 0
  \end{CD}
\end{equation*}
Here, the second row is exact because $\Pic(\Ubar)=0$ and $\kbar[X]^\times =
\kbar^\times$.

The $\Gamma_k$-module $\kbar[U]^\times/\kbar^\times$ is isomorphic to the
module of characters of the algebraic $k$-torus $T \subset R_{L/k}(\Gm{L})
\times R_{K/k}(\Gm{K})$ with coordinates $(\zz_1,\zz_2$) given by
\begin{equation*}
  N_{L/k}(\zz_1) = N_{K/k}(\zz_2).
\end{equation*}
Indeed, the character group $\That$ is the quotient of $\ZZ[\Gamma_k/\Gamma_L]
\oplus \ZZ[\Gamma_k/\Gamma_K]$ with the diagonal $\Gamma_k$-action by the
relation
\begin{equation*}
  \sum_{\sigma \Gamma_L \in \Gamma_k/\Gamma_L} \sigma \Gamma_L =
  \sum_{\gamma \Gamma_K \in \Gamma_k/\Gamma_K} \gamma \Gamma_K.
\end{equation*}
The isomorphism $i: \That \to \kbar[U]^\times/\kbar^\times$ is given by
\begin{equation*}
  i(\sigma \Gamma_L) = [t - \sigma(\eta)],\quad i(\gamma \Gamma_K) = [\gamma(u_1)].
\end{equation*}

The abelian group $\Div_{\Xbar \setminus \Ubar}(\Xbar)$ is free of rank $rn$,
generated by $D_{i,j}$ for $i=1, \dots, r$ and $j=1, \dots, n$.  There is a
bijection $\Gamma_k/\Gamma_L \times \Gamma_k/\Gamma_K \to \{D_{i,j}\}$ defined
by $(\sigma\Gamma_L,\gamma\Gamma_K) \mapsto \{t=\sigma(\eta),\gamma(u_1)=0\}$
that is compatible with the action of $\Gamma_k$, acting diagonally on the
left hand side. Recalling $A = L \otimes_k K$, this shows that $\Div_{\Xbar
  \setminus \Ubar}(\Xbar)$ is isomorphic to the module of characters of the
$k$-torus $M = R_{A/k}(\Gm{A})$. Let $j: \Mhat \to \Div_{\Xbar \setminus
  \Ubar}(\Xbar)$ be this isomorphism.

Consider the homomorphism $\divi :\kbar[U]^\times/\kbar^\times \to
\Div_{\Xbar \setminus \Ubar}(\Xbar)$ that maps a function to its divisor. We
have
\begin{equation*}
  \divi([t-\eta]) = \sum_{j=1}^n D_{1,j}, \quad \divi([u_1]) = \sum_{i=1}^r D_{i,1}.
\end{equation*}
Therefore, the abelian group $\Pic(\Xbar)$ is free of rank $(r-1)(n-1)$, with a basis consisting of the classes
$[D_{i,j}]$ for $i=1, \dots, r-1$ and $j=1, \dots, n-1$.  
Now $\divi$ induces a homomorphism on the character modules $\dhat:
\That \to \Mhat$. The dual of this homomorphism is then given by the
morphism of $k$-tori
\begin{equation*}
  d: M \to T,\quad \zz \mapsto (N_{A/L}(\zz),N_{A/K}(\zz)).
\end{equation*}

Let $S$ be the N\'eron--Severi torus dual to the $\Gamma_k$-module
$\Pic(\Xbar)$, so that we have an exact sequence of tori
\begin{equation*}
  1 \to S \to M \to T \to 1.
\end{equation*}
This makes $M$ into a $T$-torsor under $S$.

We now describe the map $U \to T$ induced by the splitting $\phi$ as
in~(\ref{eq:splitting}) by a choice of $(\rho,\xi) \in L^\times \times
K^\times$ satisfying~(\ref{eq:splitting_condition}). The induced map is given
by
\begin{equation*}
  U \to T,\quad (t,\xx) \mapsto (\rho^{-1}(t-\eta),\xi^{-1}\xx),
\end{equation*}
and it is easy to see that the image is in $T$ using the equation of $X$ and
the condition~(\ref{eq:splitting_condition}). Therefore, the image of $U$ in
$T$ is isomorphic to the subvariety of $\AA^1_k \times T$ with coordinates
$(t,\zz_1,\zz_2)$ defined by
\begin{equation*}
  t-\eta = \rho \zz_1.
\end{equation*}

By \cite[Theorem~2.3.1, Corollary~2.3.4]{MR89f:11082}, any universal torsor
$\T_U$ over $U$ is the pullback of a torsor $M$ from $T$ to $U$. Our
computations show that it is isomorphic to the subvariety of $\AA^1_k \times
R_{A/k}(\Gm{A})$ with coordinates $(t,\zz)$ defined
by~(\ref{eq:torsor_equation}).  This completes the proof of Proposition
\ref{prop:torsor}. \hfill $\square$

\begin{remark}\label{rem:reducible}
  One can determine equations for universal torsors $\T$ over the smooth locus
  $X_\sm$ of the variety $X$ defined by~(\ref{eq:variety}) even if $P(t)$ is
  not irreducible over $k$; note that $X$ is not smooth if $P(t)$ is not
  separable. Then $\Pic(X_\sm)$ is a finitely generated (but not necessarily
  free) abelian group. So $\T$ will be a torsor over $X_\sm$ under the group
  of multiplicative type that is dual to $\Pic(X_\sm)$.

  The result is as follows: Assume that
  \begin{equation*}
    P(t) = cP_1(t)^{e_1}\cdots P_d(t)^{e_d}
  \end{equation*}
  for $c \in k^\times$, some irreducible monic polynomials $P_i(t) \in k[t]$
  and positive integers $e_i$. Write $L_i = k[t]/(P_i(t))$ and let $\eta_i$ be
  the class of $t$ in $L_i$. For $i=1, \dots, d$, consider the \'etale
  $L_i$-algebra $A_i = L_i \otimes_k K$. Let $U \subset X_\sm$ be the open
  subvariety given by $P(t) \neq 0$. For any universal torsor $\T$ over
  $X_\sm$, there exists a solution $(\rho_1,\dots,\rho_d,\xi) \in L_1^\times
  \times \cdots \times L_d^\times \times K^\times$ of the equation
  \begin{equation}\label{eq:ut_nonsep}
    cN_{L_1/k}(\rho_1)^{e_1} \cdots N_{L_d/k}(\rho_d)^{e_d} = N_{K/k}(\xi)
  \end{equation}
  such that $\T_U$ is isomorphic to the subvariety of $\AA^1_k
  \times \prod_{i = 1}^d R_{A_i/k}(\Gm{A_i})$ with coordinates
  $(t,\zz_1,\dots,\zz_d)$ given by the system of equations
  \begin{equation*}
    t - \eta_i = \rho_i N_{A_i/L_i}(\zz_i)\ \text{ for }\ 1 \leq i \leq d.
  \end{equation*}
  Conversely, for any solution of \eqref{eq:ut_nonsep}, there is a
  universal torsor $\T$ over $X_\sm$ with such a local description.

  Note that \cite[Theorem~2.2]{MR1961196} is a special case of this
  result. The proof is an adaptation of the proof of
  \cite[Theorem~2.2]{MR1961196} or Proposition~\ref{prop:torsor}.
\end{remark}

\medskip

In case that $k$ is a number field, we can link the existence of
universal torsors as in Proposition~\ref{prop:torsor} to the absence
of Brauer--Manin obstructions on $X$. We have the following general
statement, suggested to us by the referee:

\begin{lemma}\label{lemm:torsorbis}
  Let $X$ be a smooth, geometrically integral variety over a number
  field $k$ such that $\kbar[X]^\times = \kbar^\times$.  Assume that
  $\Pic(\Xbar)$ is a finitely generated abelian group.  If there is no
  Brauer--Manin obstruction to the Hasse principle on a smooth proper
  model of $X$, then universal torsors over $X$ exist.
\end{lemma}

\begin{proof}
  Since $\Pic(\Xbar)$ is finitely generated, there exists an open
  subset $U \subset X$ such that $\Pic(\Ubar) = 0$.  By
  \cite[Proposition~2.2.8]{MR89f:11082}, universal $X$-torsors exist
  if and only if the exact sequence
  \begin{equation*}
    1 \to \kbar^\times \to \kbar[U]^\times \to
    \kbar[U]^\times/\kbar^\times \to 1
  \end{equation*}
  of Galois modules is split. The same result applies, of course, to
  any smooth proper model $X^c$ of $X$.  Hence universal $X$-torsors
  exist if and only if universal $X^c$-torsors exist; by
  \cite[Proposition 6.1.4]{MR1845760}, this is the case exactly when
  $X^c(\AA_k)^{\Br(X^c)} \neq \emptyset$.
\end{proof}

\section{Quadratic polynomials represented by a norm over $\QQ$}

Let $k = \QQ$. As before, we can assume without loss of generality that
$P(t)=c(t^2-a)$ with $c \in \QQ^\times$ and $a \in \QQ$, but now we do not
assume that $P(t)$ is split in $K$. Using the deep work of Browning and
Heath-Brown and our description of universal torsors, we can prove the
following result:

\begin{prop}\label{lem:WA_over_Q}
  If the quadratic polynomial $P(t)$ is irreducible over $\QQ$, then
  each universal torsor over $X \subset \AA^{n+1}_k$ defined
  by~(\ref{eq:variety}) satisfies weak approximation.
\end{prop}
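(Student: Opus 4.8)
The goal is to establish weak approximation for $\T_U$, where $\T_U$ is described explicitly by Proposition~\ref{prop:torsor}. Let me think about what this variety looks like concretely.

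We have $k = \QQ$, $P(t) = c(t^2 - a)$ irreducible, so $L = \QQ(\sqrt a)$ is a quadratic field and $\eta = \sqrt a$. The key is to understand $\T_U$ via equation~(\ref{eq:torsor_equation_general}): it is the subvariety of $\AA^1_\QQ \times R_{A/\QQ}(\Gm A)$ given by $t - \sqrt a = \rho N_{A/L}(\zz)$, where $A = L \otimes_\QQ K$ and $(\rho, \xi)$ is a fixed solution of $c N_{L/\QQ}(\rho) = N_{K/\QQ}(\xi)$.

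Let me reason about my strategy.
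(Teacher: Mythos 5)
Your proposal stops before any argument begins: after restating the explicit description of $\T_U$ from Proposition~\ref{prop:torsor}, you announce that you will ``reason about your strategy'' and then give no strategy and no proof. Nothing in what you have written addresses weak approximation at all. This is a genuine gap, and it is worth being explicit about why it cannot be filled by soft arguments alone: the variety $t-\sqrt a=\rho N_{A/L}(\zz)$ is not rational or a homogeneous space in any obvious way, and weak approximation for it is a deep analytic fact. The entire content of the proposition is the reduction to the one analytic input available, namely \cite[Theorem~2]{arXiv:1109.0232}, which establishes weak approximation for the variety $Y$ defined by $N_{K/\QQ}(\ww)(t-\sqrt a)=\rho N_{K/L}(\yy)$. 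Your write-up never invokes this (or any substitute), so no amount of elaboration of the setup will yield the statement.

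Concretely, the missing steps are: (i) a case division according to whether $P(t)$ splits in $K$ or remains irreducible there, since equation~(\ref{eq:torsor_equation_general}) specializes differently (to~(\ref{eq:torsor_equation}) with $r=2$, respectively to~(\ref{eq:torsor_equation_2}) with $F=KL$); (ii) in the split case, an explicit biregular change of variables ($\ww=\xx_2^{-1}$, $\yy=\xx_1\xx_2^{-1}$, using $N_{K/\QQ}(\xx_2)=N_{K/L}(\xx_2)\sigma(N_{K/L}(\xx_2))$) identifying $\T_U$ with the Browning--Heath-Brown variety $Y$; (iii) in the non-split case, a coset computation showing $N_{F/\QQ}(\ww)=N_{F/L}(\ww)N_{F/L}(\sigma(\ww))$ and an isomorphism of the corresponding variety $Y'$ with the product $\T_U\times R_{F/\QQ}(\Gm{F})$, from which weak approximation for $\T_U$ follows because the quasi-split factor has local points everywhere. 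You would need to supply all of this, plus the citation of the analytic theorem, for the proof to stand.
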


\begin{proof}
  Let $\T$ be a universal torsor over $X$, with $\T_U$ as in
  Proposition~\ref{prop:torsor}. It is enough to prove weak approximation on
  $\T_U$.

  Assume that $P(t)$ is split in $K$.
  Consider $\T_U\subset \AA^1_k \times (R_{K/k}(\Gm{K}))^2$ defined by
  equation~(\ref{eq:torsor_equation}) in the case $r=2$. Let $L=k(\sqrt
  a)$. For any $\sigma \in \Gamma_k$, we have $\sigma(L)=L$, and for any $x
  \in L$, we have $\sigma(x)=\sigma^{-1}(x)$.  Therefore,
  (\ref{eq:torsor_equation}) can be rewritten as
  \begin{equation*}
    t-\sqrt a = \rho N_{K/L}(\xx_1)\cdot \sigma(N_{K/L}(\xx_2)),
  \end{equation*}
  where $\sigma \in \Gamma_k$ with $\sigma(\sqrt a)=-\sqrt a$.

  The variety given by this equation is isomorphic to the subvariety
  $Y$ of $\AA^1_k \times (R_{K/k}(\Gm{K}))^2$ defined by the equation
  \begin{equation}\label{eq:torsor_BHB} N_{K/k}(\ww)(t-\sqrt a) =
    \rho N_{K/L}(\yy),
  \end{equation} via the substitution
  \begin{equation*}
    \ww = \xx_2^{-1},\quad \yy = \xx_1\xx_2^{-1}
  \end{equation*}
  with inverse
  \begin{equation*}
    \xx_1=\ww^{-1}\yy,\quad \xx_2=\ww^{-1}
  \end{equation*}
  using $N_{K/k}(\xx_2) = N_{L/k}(N_{K/L}(\xx_2)) = N_{K/L}(\xx_2) \cdot
  \sigma(N_{K/L}(\xx_2))$. This is exactly
  \cite[equation~(1.5)]{arXiv:1109.0232}. Weak approximation then holds on $Y$
  because of \cite[Theorem~2]{arXiv:1109.0232}.

  Assume now that $P(t)$ remains irreducible over $K$ and write $F = K \cdot
  L$, where $L = k(\sqrt{a})$. Choose some $\sigma \in \Gamma_K$ such that
  $\sigma \notin \Gamma_F=\Gamma_L \cap \Gamma_K$, so $\sigma \notin
  \Gamma_L$. Therefore, $\sigma$ is a representative of the non-trivial class
  both in $\Gamma_K/\Gamma_F$ and in $\Gamma_k/\Gamma_L$.

  Let $\gamma_1, \dots, \gamma_n$ be a set of coset representatives of
  $\Gamma_L/\Gamma_F$. We claim that a set of representatives of
  $\Gamma_k/\Gamma_F$ is given by $\gamma_1, \dots, \gamma_n,
  \gamma_1\sigma, \dots, \gamma_n\sigma$. Indeed, if
  $\gamma_i\sigma\Gamma_F = \gamma_j\sigma\Gamma_F$, then we have
  $\sigma^{-1}\gamma_j^{-1}\gamma_i\sigma \in \Gamma_F = \Gamma_L \cap
  \Gamma_K$.  Using $\sigma \in \Gamma_K$, this gives
  $\gamma_j^{-1}\gamma_i \in \sigma\Gamma_K\sigma^{-1} = \Gamma_K$,
  and we have $\gamma_j^{-1}\gamma_i \in \Gamma_L$ by definition.
  Hence $\gamma_j^{-1}\gamma_i \in \Gamma_L \cap \Gamma_K = \Gamma_F$,
  so $\gamma_i\Gamma_F = \gamma_j\Gamma_F$, which implies
  $i=j$. Furthermore, if $\gamma_i\sigma\Gamma_F = \gamma_j\Gamma_F$,
  then $\gamma_j^{-1}\gamma_i\sigma \in \Gamma_F \subset \Gamma_L$,
  which contradicts the fact that $\gamma_i, \gamma_j \in \Gamma_L$,
  but $\sigma \notin \Gamma_L$. Finally, $\gamma_i\Gamma_F =
  \gamma_j\Gamma_F$ only for $i=j$ by construction. This proves the
  claim.

  Therefore, $N_{F/k}(\ww) = N_{F/L}(\ww)N_{F/L}(\sigma(\ww))$. We note that
  $\sigma$ induces a $k$-automorphism of the variety $R_{F/k}(\Gm{F})$:
  this is clear from the functor-of-points description of $R_{F/k}(\Gm{F})$.

  Using this observation, we see that the variety $Y' \subset \AA^1_k \times
  (R_{F/k}(\Gm{F}))^2$ with coordinates $(t,\ww,\yy)$ defined
  by
  \begin{equation*}
    N_{F/k}(\ww)(t-\sqrt a) = \rho N_{F/L}(\yy)
  \end{equation*}
  (i.e. equation~(\ref{eq:torsor_BHB}) with $K$ replaced by $F$) is isomorphic
  to the product $\T_U \times R_{F/k}(\Gm{F})$ with coordinates $(t,\xx,\yy)$
  subject to~(\ref{eq:torsor_equation_2}). The isomorphism is defined by the
  map
  \begin{equation*}
    (t,\ww,\yy) \mapsto (t, (\ww \sigma(\ww))^{-1} \yy,\ww),
  \end{equation*}
  the inverse substitution being given by
  \begin{equation*}
    (t,\xx,\yy) \mapsto (t, \yy, \xx\yy \sigma(\yy)).
  \end{equation*}
  Since $Y'$ satisfies weak approximation by \cite[Theorem~2]{arXiv:1109.0232}
  and since $R_{F/k}(\Gm{F})$ is rational and therefore has non-trivial
  $k_v$-points for any place $v$, this implies that $\T_U$ satisfies weak
  approximation.
\end{proof}

\begin{proof}[Proof of Theorem~\ref{thm:Q}]
  If $P(t)$ is split over $\QQ$ with two distinct roots, then
  Theorem~\ref{thm:Q} is a special case of
  \cite[Theorem~1.1]{MR1961196}. If it is split over $\QQ$ with one
  double root, $U \subset X$ as in Proposition~\ref{prop:torsor} is a
  principal homogeneous space of a torus, and Theorem~\ref{thm:Q}
  holds by \cite{MR631309}.

  Next, assume that $P(t)$ is irreducible over $\QQ$. Assume that there is no
  Brauer--Manin obstruction to the Hasse principle on a smooth and proper
  model of $X$. Then Lemma~\ref{lemm:torsorbis} shows that universal
  torsors $\T$ over $X$ exist. By Proposition~\ref{lem:WA_over_Q}, $\T_U$
  satisfies weak approximation. Proposition~\ref{prop:torsor} shows that
  $\kbar[X]^\times = \kbar^\times$ and that $\Pic(\Xbar)$ is free of finite
  rank. Then an application of Proposition~\ref{lem:torsor} gives the result.
\end{proof}

\begin{cor}\label{cor:non-split}
  If the quadratic polynomial $P(t) \in \QQ[t]$ is \emph{not} split in the
  Galois closure of $K/\QQ$, then the Hasse principle and weak
  approximation hold on any smooth proper model of $X \subset
  \AA^{n+1}_\QQ$ defined by~(\ref{eq:variety}).
\end{cor}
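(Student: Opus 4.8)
The plan is to deduce the corollary from Theorem~\ref{thm:Q} by showing that, under the stated hypothesis, there is in fact no Brauer--Manin obstruction at all. By Theorem~\ref{thm:Q} the obstruction is the only one on a smooth proper model $X^c$, so it suffices to prove $\Br(X^c) = \Br_0(X^c)$. Since $X^c$ is smooth, proper and geometrically rational we have $\Br(\overline{X^c}) = 0$, whence $\Br(X^c) = \Br_1(X^c)$ and
\[
  \Br(X^c)/\Br_0(X^c) \cong H^1(\QQ, \Pic(\overline{X^c})).
\]
The whole corollary therefore reduces to the vanishing $H^1(\QQ, \Pic(\overline{X^c})) = 0$.

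First I would unwind the hypothesis and read off the relevant Galois module. A polynomial split over $\QQ$ is split in $\widetilde{K}$, so the assumption forces $P(t)$ to be irreducible over $\QQ$; normalising $P(t) = c(t^2-a)$ we obtain a quadratic field $L = \QQ(\sqrt a)$, and the hypothesis reads $\sqrt a \notin \widetilde K$, i.e.\ $L \cap \widetilde K = \QQ$. As $L/\QQ$ is Galois this says that $L$ and $\widetilde K$ are linearly disjoint, equivalently that the quadratic character $\chi \colon \Gamma_\QQ \to \{\pm 1\}$ cutting out $L$ restricts nontrivially to $\Gamma_{\widetilde K} = \bigcap_\sigma \sigma\Gamma_K\sigma^{-1}$; in particular $\sqrt a \notin K$, so we are in case~(b). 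Eliminating the relations coming from $\dhat$ in the character-lattice data of Proposition~\ref{prop:torsor} (with $r = 2$) identifies $\Pic(\Xbar)$ with the $\chi$-twist of $\ZZ[\Gamma_\QQ/\Gamma_K]$ modulo its $\Gamma_\QQ$-invariant line, so that
\[
  0 \to \chi \to \ZZ[\Gamma_\QQ/\Gamma_K]\otimes\chi \to \Pic(\Xbar) \to 0 .
\]
A short computation with Shapiro's lemma and the projection formula then identifies $H^1(\QQ, \Pic(\Xbar))$ with the kernel of the restriction $H^2(\Gamma_\QQ, \chi) \to H^2(\Gamma_K, \chi|_{\Gamma_K})$; this group can be nonzero (already for $[K:\QQ]=2$ it is $\ZZ/2$), so the open variety does not suffice and one must pass to $X^c$.

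The passage to the proper model goes through the divisors at infinity. Because $\kbar[X]^\times = \kbar^\times$ (Proposition~\ref{prop:torsor}), the irreducible components of $\overline{X^c}\setminus\Xbar$ span a permutation $\Gamma_\QQ$-module $Q$ fitting into
\[
  0 \to Q \to \Pic(\overline{X^c}) \to \Pic(\Xbar) \to 0 ,
\]
and since $H^1(\Gamma_\QQ, Q) = 0$ the group $H^1(\QQ, \Pic(\overline{X^c}))$ injects into $H^1(\QQ, \Pic(\Xbar))$. The goal thus becomes to show that the nonzero classes found above do \emph{not} lift to $X^c$, i.e.\ that they are injected by the connecting map into $H^2(\Gamma_\QQ, Q)$. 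This is exactly where linear disjointness enters: over $\widetilde K$ the algebra $K$ splits completely, so the components of the two degenerate fibres of $X^c \to \mathbb{P}^1$ over $t = \pm\sqrt a$ and of the fibre over $t = \infty$ become individually rational, while $\sqrt a \notin \widetilde K$ guarantees that $\Gamma_\QQ$ still interchanges the two degenerate fibres through $\chi$. Together this should exhibit all fibre components as a single permutation $\Gamma_\QQ$-set, making $\Pic(\overline{X^c})$ a permutation (hence $H^1$-acyclic) module.

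The hard part will be precisely this last step: constructing, or at least controlling, the components of a smooth proper model at $t = \infty$ well enough to determine $Q$ and the connecting homomorphism, and verifying that the disjointness $L \cap \widetilde K = \QQ$ is exactly the condition that splits the displayed sequence $\Gamma_\QQ$-equivariantly. Concretely this amounts to checking that the quaternionic class built from $\chi$ (the only possible source of a nonconstant algebraic Brauer element) becomes constant on $X^c$ precisely when $\sqrt a \notin \widetilde K$. I would verify this either by writing down an explicit model fibred over $\mathbb{P}^1$ and reading off the Galois action on $Q$ directly, or by a direct module-theoretic argument that the above extension splits equivariantly exactly under the disjointness hypothesis; the low-rank case $[K:\QQ]=2$, where $X^c$ is a smooth quadric surface with permutation Picard module, is a reassuring sanity check for this mechanism.
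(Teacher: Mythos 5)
Your overall reduction is exactly the paper's: by Theorem~\ref{thm:Q} it suffices to show that there is no Brauer--Manin obstruction at all, i.e.\ that $\Br(X^c)=\Br_0(X^c)$, equivalently (since $\overline{X^c}$ is rational) that $H^1(\QQ,\Pic(\overline{X^c}))=0$. The paper disposes of this in one line by citing \cite[Theorem~2.2]{Wei11}, which asserts precisely that this vanishing holds when $P(t)$ is not split in the Galois closure of $K/\QQ$. You instead set out to prove the vanishing from scratch, and this is where the proposal has a genuine gap: you correctly compute (in outline) the ``open'' part $H^1(\QQ,\Pic(\Xbar))$, observe that it can be nonzero, and then state that the decisive step is to show that these classes die on the proper model --- but you do not carry out that step. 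You explicitly defer the construction and Galois analysis of the boundary components of $X^c$ (in particular the fibre over $t=\infty$ and the connecting map into $H^2(\Gamma_\QQ,Q)$), and the claim that linear disjointness of $L$ and $\widetilde K$ forces $\Pic(\overline{X^c})$ to be a permutation module is asserted as what ``should'' happen rather than proved. Since for split $P(t)$ the group $H^1(\QQ,\Pic(\overline{X^c}))$ is genuinely nonzero in general, the entire content of the corollary lives in this unproved step; as written, the argument is a plausible plan for reproving \cite[Theorem~2.2]{Wei11}, not a proof.

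Two smaller cautions on the part you did write out. First, your identification of $H^1(\QQ,\Pic(\Xbar))$ with the kernel of the restriction $H^2(\Gamma_\QQ,\chi)\to H^2(\Gamma_K,\chi|_{\Gamma_K})$ needs the map $H^1(\Gamma_\QQ,\ZZ[\Gamma_\QQ/\Gamma_K]\otimes\chi)\to H^1(\QQ,\Pic(\Xbar))$ in the long exact sequence to vanish; since $\chi|_{\Gamma_K}$ is nontrivial here, $H^1(\Gamma_K,\chi|_{\Gamma_K})$ is itself of order $2$, so this requires justification. Second, the short exact sequence $0\to Q\to\Pic(\overline{X^c})\to\Pic(\Xbar)\to 0$ with $Q$ a permutation module only gives injectivity of $H^1(\QQ,\Pic(\overline{X^c}))\hookrightarrow H^1(\QQ,\Pic(\Xbar))$ after you know $Q$ is generated freely by the boundary components, which again uses $\kbar[X]^\times=\kbar^\times$ plus an analysis of the components at infinity that you have not supplied. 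If you want a self-contained proof rather than the citation, the cleanest route is the one you hint at: compute $\Br(X^c)/\Br_0(X^c)$ via the known description of the vertical Brauer group of the fibration $X^c\to\mathbb{P}^1$ and check that the only candidate classes, built from $\chi$ and the fibres over $t=\pm\sqrt a$ and $t=\infty$, become constant exactly when $\sqrt a\notin\widetilde K$; but that computation is the substance of \cite[Theorem~2.2]{Wei11} and must actually be done.
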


\begin{proof}
  By \cite[Theorem~2.2]{Wei11}, the smooth proper model $X^c$
  satisfies $\Br(X^c)=\Br_0(X^c)$, so the result follows immediately from
  Theorem~\ref{thm:Q}.
\end{proof}

Finally, we generalize Theorem~\ref{thm:Q} to equations involving a
multivariate polynomial $P(t_1,\dots,t_\ell)$, using techniques
developed by Harari in \cite{MR1478028}:

\begin{cor}
  Let $P_0, P_1, P_2$ be polynomials in $\ell-1$ variables $t_2,
  \dots, t_\ell$ over $\QQ$ of arbitrary degree satisfying
  \begin{equation*}
    \gcd(P_0(t_2, \dots, t_\ell), P_1(t_2, \dots, t_\ell), P_2(t_2, \dots, t_\ell))=1.
  \end{equation*}

  Let $K$ be an arbitrary number field of degree $n = [K : \QQ]$. Then the
  Brauer--Manin obstruction to the Hasse principle and weak approximation is
  the only obstruction on any smooth proper model of $X \subset
  \AA^{n+\ell}_\QQ$ defined by the equation
  \begin{equation*}
    t_1^2 \cdot P_2(t_2, \dots, t_\ell) + t_1\cdot P_1(t_2, \dots, t_\ell) +
    P_0(t_2, \dots, t_\ell) = N_{K/\QQ}(\zz).
  \end{equation*}
\end{cor}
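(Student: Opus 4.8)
The plan is to exhibit $X$ as the total space of a fibration over affine space in the variables $t_2,\dots,t_\ell$, whose fibres are exactly the varieties covered by Theorem~\ref{thm:Q}, and then to propagate the statement ``the Brauer--Manin obstruction is the only one'' from the fibres to the total space by the fibration method of Harari \cite{MR1478028}. Concretely, I would consider the projection
\[
  \pi : X \to \AA^{\ell-1}_\QQ,\qquad (t_1,\dots,t_\ell,\xx)\mapsto(t_2,\dots,t_\ell),
\]
and fix (via Hironaka) a smooth proper model $X^c$ together with a morphism $X^c \to \mathbb{P}^{\ell-1}_\QQ$ extending $\pi$. For $a=(a_2,\dots,a_\ell)\in\AA^{\ell-1}(\QQ)$ the fibre $\pi^{-1}(a)$ is the affine variety
\[
  P_2(a)\,t_1^2 + P_1(a)\,t_1 + P_0(a) = N_{K/\QQ}(\xx),
\]
which is precisely the variety of~(\ref{eq:variety}) for the quadratic polynomial $P_a(t_1)=P_2(a)t_1^2+P_1(a)t_1+P_0(a)\in\QQ[t_1]$. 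Whenever $P_a$ is a genuine quadratic, Theorem~\ref{thm:Q} applies directly and shows that the Brauer--Manin obstruction is the only one on $\pi^{-1}(a)$.

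Next I would check the geometry of the fibration. The hypothesis $\gcd(P_0,P_1,P_2)=1$ should guarantee that $X$ is geometrically integral and that its generic fibre, over $F=\QQ(t_2,\dots,t_\ell)$, is the variety $P_\eta(t_1)=N_{K\otimes_\QQ F/F}(\xx)$; since $K\otimes_\QQ F$ is a degree-$n$ field extension of $F$, this fibre is smooth, geometrically integral and geometrically rational, with free Picard group and only constant invertible functions, exactly as computed in Proposition~\ref{prop:torsor}. In particular its smooth proper model has trivial geometric Brauer group, which is the structural input the fibration method requires. The degenerate fibres need separate care: where $P_2(a)=0$ but $P_1(a)\neq0$ the fibre is $\QQ$-rational (one solves for $t_1$), and where $P_2(a)=P_1(a)=0$ but $P_0(a)\neq0$ the fibre is a principal homogeneous space under a norm torus, so the Brauer--Manin obstruction is the only one by \cite{MR631309}; the common zero locus of $P_0,P_1,P_2$ lies in codimension $\ge 2$ by the gcd hypothesis and does not interfere.

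Finally I would invoke Harari's fibration theorem. Because the base $\AA^{\ell-1}_\QQ$ is rational and satisfies weak approximation, and the generic fibre is geometrically rational (hence rationally connected) with $\Br(X_\eta)/\Br(F)$ finite, the method of \cite{MR1478028} --- applied over the higher-dimensional base, or by induction on $\ell$ after restricting to pencils of lines so as to reduce to a $\mathbb{P}^1$-base --- transfers ``the Brauer--Manin obstruction is the only one'' from the dense family of good fibres to $X^c$, giving the corollary. The hard part, I expect, will be the bookkeeping of $\Br(X^c)$ relative to its vertical part pulled back from the base and the finitely many horizontal contributions, together with the verification that the good fibres (those to which Theorem~\ref{thm:Q} or \cite{MR631309} applies) form a Hilbert-dense subset of the base and that every element of $\Br(X^c)$ is indeed captured by its evaluation on them in the sense demanded by Harari's formal lemma. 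Since the conclusion is only that the obstruction is the \emph{only} one (rather than that it vanishes), I would not need the fibres to satisfy the Hasse principle unconditionally, which is what makes the passage through Theorem~\ref{thm:Q} legitimate even though it is available only over $\QQ$ and not over the function field $F$.
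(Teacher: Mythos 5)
Your proposal follows essentially the same route as the paper: project to $\AA^{\ell-1}_\QQ$ in the variables $t_2,\dots,t_\ell$, note that the gcd hypothesis confines the bad locus to codimension $\ge 2$, handle the fibres by Theorem~\ref{thm:Q} in the quadratic case, by rationality in the linear case and by \cite{MR631309} in the constant case, and conclude with Harari's fibration theorem \cite[Th\'eor\`eme~3.2.1]{MR1478028}. The bookkeeping issues you anticipate (Hilbert density of good fibres, vertical versus horizontal Brauer classes, reduction to a one-dimensional base) are all absorbed by the hypotheses of Harari's theorem as stated, so no further work is needed beyond what you describe.
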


\begin{proof}
  Consider the projection $\pi: X \to \AA^{\ell-1}_\QQ$ defined by
  $(\tt,\zz) \mapsto (t_2, \dots, t_\ell)$ and consider the closed subset
  \begin{equation*}
    F = \{P_0(t_2, \dots, t_\ell) = P_1(t_2, \dots, t_\ell) = P_2(t_2, \dots, t_\ell)=0\}
  \end{equation*}
  of $\AA^{\ell-1}_\QQ$, which is of codimension at least $2$ by assumption.

  The fibers of $\pi$ over $\AA^{\ell-1}_\QQ \setminus F$ are geometrically
  integral. The fiber over each rational point in this set is defined by
  $P(t_1)=N_{K/\QQ}(\zz)$ for some non-zero polynomial $P(t_1)$ of degree at
  most $2$. By Theorem~\ref{thm:Q} for quadratic $P(t_1)$, by rationality for
  linear $P(t_1)$ and by \cite{MR631309} for constant $P(t_1)$, this has the
  property that the Brauer--Manin obstruction to the Hasse principle and weak
  approximation is the only obstruction on any smooth proper model. The
  generic fiber of $\pi$ is a rational variety. Therefore, the result follows
  by an application of \cite[Th\'eor\`eme~3.2.1]{MR1478028} (where we can check
  over $\kbar$ that \emph{condition (*)} of that result holds in our case).
\end{proof}

\section{Brauer groups for quartic norms}\label{sec:brauer}

For $X$ defined by (\ref{eq:variety}) with $K/k$ a quartic extension
of number fields, we show in this section that the Brauer group of a
smooth proper model $X^c$ of $X$ is trivial in certain cases. Then an
application of Theorem~\ref{thm:Q} proves the Hasse principle and weak
approximation on $X$ for $k=\QQ$. In other cases, we show that the Brauer group is
non-trivial. In the next section, we will also give an explicit example which
illustrates the fact that a non-trivial Brauer class can give rise to
an obstruction to weak approximation in this situation.

Recall that the Brauer group is a birational invariant of smooth, proper
varieties: therefore it suffices to study the Brauer group of a given (smooth)
compactification of $X$. Following the ideas developed in
\cite[Section~2]{MR2053456}, we will use a certain partial compactification
$Y$ of $X$ contained in a smooth proper model $X^c$. It suffices to consider
this particular model. Note that the natural maps $\Br(X^c) \to \Br(Y) \to
\Br(X)$ are injective.

A classical argument, based on the Hochschild--Serre spectral sequence (see
\cite[Proposition~2.3]{MR2053456}), gives an exact sequence of the form
\begin{equation}\label{eq:hochschild-serre}
  0 \to \Br_0(Y) \to \Br(Y) \to H^1(k, \Pic(\Ybar)) \to H^3(k,\kbar^\times).
\end{equation}
If $k$ is a number field, then $H^3(k,\kbar^\times) = 0$. Let
$\Gamma_k$ be the abolute Galois group of $k$. Let $L$ be the
splitting field of $P(t)$.

\begin{lemma}\label{lem:exact}
  Consider the norm one torus $T=R_{K/k}^1(\Gm{K})$. Let $T^c$ be a
  smooth compactification of $T$ appearing in the construction of the
  partial compactification $Y$ of $X$ as in
  \cite[Section~2]{MR2053456}. Then
  \begin{equation*}
    0 \to Q_L/\Res_{k/L}(Q_k) \to H^1(k,\Pic(\overline Y)) \to H^1(k,\Pic(\Tcbar))
  \end{equation*}
  is exact, where
  \begin{equation*}
    \begin{aligned}
      Q_k&=\ker(H^1(k,\QQ/\ZZ)\to H^1(K,\QQ/\ZZ)),\\
      Q_L&=\ker(H^1(L,\QQ/\ZZ)\to H^1(K \otimes_k L,\QQ/\ZZ)).
    \end{aligned}
  \end{equation*}
  Here, $\Res_{k/L}:H^1(k,\QQ/\ZZ) \to H^1(L,\QQ/\ZZ)$ and all other maps are restriction maps.
\end{lemma}

\begin{proof}
  Because of \cite[Proposition 2.5]{MR2053456}, we have the exact
  sequence
  \begin{equation}\label{bs}
    0  \rightarrow H^1(k,\widehat T\otimes_\ZZ
    \ZZ[\Gamma_k/\Gamma_L])/j_*H^1(k,\widehat T) \rightarrow H^1(k,\Pic(\overline Y))
    \rightarrow \cyr{X}^2_\omega (\widehat T)_P \rightarrow 0,
  \end{equation}
  where $j: \ZZ \to \ZZ[\Gamma_k/\Gamma_L]$ is defined by $1 \mapsto
  \sum_{\sigma\Gamma_L \in
    \Gamma_k/\Gamma_L}\sigma\Gamma_L$. Furthermore, $\That$ is the
  character group of $T$ and $\cyr{X}^2_\omega (\widehat T)_P$ is a
  subgroup of $\cyr{X}^2_\omega (\widehat T)$
  \cite[D\'efinition~2.4]{MR2053456}.

  By Shapiro's lemma and the long exact sequence in Galois cohomology
  associated to the short exact sequence $0 \to \ZZ \to
  \ZZ[\Gamma_k/\Gamma_L] \to \That \to 0$, the first term
  of~(\ref{bs}) is isomorphic to $Q_L/\Res_{k/L}(Q_k)$ as above.

  Furthermore, we have $\cyr{X}^2_\omega(\widehat{T}) \cong
  H^1(k,\Pic(\overline{T}^c))$ for all smooth compactifications
  $T^c$ of $T$ by \cite[Proposition~9.5]{MR878473}. This gives the result.
\end{proof}

In the following result, a more detailed version of
Theorem~\ref{thm:brauer_short} stated in the introduction, we determine
the Brauer groups.

\begin{theorem}\label{thm:brauer}
  Let $k$ be a field of characteristic zero with $H^3(k,\kbar^\times)
  = 0$. Let $K/k$ be a field extension of degree $4$ with Galois
  closure $K^\gc$. Let $P(t)$ be an irreducible quadratic polynomial
  over $k$ with splitting field $L$. Let $X^c$ be a smooth proper
  model of $X\subset \AA^5_k$ defined by~(\ref{eq:variety}).

 If we are in the case that $K/k$ is not Galois, that $P(t)$ remains irreducible
  over $K$ and that $K \cdot L/k$ is Galois with $\Gal(K\cdot
  L/L)\cong \ZZ/4\ZZ$, then we have $\Br(X^c)/\Br_0(X^c) \cong \ZZ/2\ZZ$. Moreover, a
  representative for the non-trivial element of $\Br(X^c)/\Br_0(X^c)$
  is given by $\Cor_{L/k}(t - \sqrt{a},\chi)$, where $\chi \in
  \Hom(\Gal(K\cdot L/L),\QQ/\ZZ)$ is a character that is non-trivial
  in $Q_L/\Res_{k/L}(Q_k)$.

  Otherwise, $\Br(X^c) = \Br_0(X^c)$.
\end{theorem}

\begin{proof}
  We have the following mutually disjoint cases, where we will first
  show that the Brauer group is trivial in
  cases~\eqref{split}--\eqref{Z2}; then we will show that it is
  non-trivial in case~\eqref{Z4}.
  \begin{enumerate}
  \item \label{split} $P(t)$ is split over $K$, i.e. $L \subset K$,
  \item \label{notsplitoverKc} $P(t)$ is irreducible over $K^\gc$,
  \item \label{S4} $P(t)$ is irreducible over $K$ and split in $K^\gc$, with
    $K^\gc \neq K \cdot L$.
  \item \label{Z2} $P(t)$ is irreducible over $K$ and split in $K^\gc$,
    with $K^\gc=K\cdot L$ and $\Gal(K^\gc/L)\cong \ZZ/2\ZZ \times
    \ZZ/2\ZZ$.
  \item \label{Z4} $P(t)$ is irreducible over $K$ and split in $K^\gc$,
    with $K^\gc=K\cdot L$ and $\Gal(K^\gc/L)\cong \ZZ/4\ZZ$.
  \end{enumerate}

  We can assume without loss of generality that $P(t)=c(t^2-a)$, where
  $a \in k^\times$ is not a square and $c \in k^\times$.

  \medskip

  \textbf{Case~\eqref{split}.} If the extension $K/k$ is Galois in this case, the
  statement follows from results in \cite{Wei11}: we refer to
  \cite[Proposition~1.2(d)]{Wei11} for the case $\Gal(K/k) \cong \ZZ/4\ZZ$ and
  to \cite[Proposition~2.6]{Wei11} for the much harder case $\Gal(K/k) \cong
  \ZZ/2\ZZ \times \ZZ/2\ZZ$.

  If $K/k$ is not Galois in case~\eqref{split}, then since $K \supset
  L=k(\sqrt a)$, there exist $u,v \in k$ such that
  $K=L(\sqrt{u+v\sqrt{a}})$.  The minimal polynomial of
  $\sqrt{u+v\sqrt a}$ over $k$ is $t^4-2ut^2+u^2-av^2$, with roots
  $\pm \sqrt{u\pm v\sqrt a}$ in $K^\gc$. Since $K/k$ is not Galois,
  $\sqrt{u-v\sqrt a}$ is not in $K$ and hence $d=u^2-av^2$ is not a
  square in $k^\times$; in particular, $v \ne 0$. On the other hand,
  $K(\sqrt{d})/k$ is Galois and hence it is the Galois closure of
  $K/k$. Since $\Gal(K(\sqrt{d})/k)$ is a subgroup of $\mathcal{S}_4$
  of order $8$, it must be the $2$-Sylow subgroup $\mathcal{D}_4$ in
  $\mathcal{S}_4$. We now show that $H^1(k,\Pic(\Ybar)) = 0$.

  The group $H^1(k,\Pic(\overline{T}^c))$ (where again $T =
  R^1_{K/k}(\Gm{K})$) is trivial, for example by
  \cite[Proposition~1]{MR687845} (although full details are not given
  there: these can be found in unpublished work of Sansuc, see
  \cite{Sansuc}). Since
  \begin{equation*}
    t^4-2ut^2+u^2-av^2 = (t^2-(u+v\sqrt a))(t^2-(u-v\sqrt a))
  \end{equation*}
  is a factorization into coprime polynomials over $L$,
  we have
  \begin{equation*}
    K \otimes_k L = L[t]/((t^2-(u+v\sqrt a))(t^2-(u-v\sqrt a)))=K
    \oplus K'
  \end{equation*}
  with $K' = L(\sqrt{u-v\sqrt{a}})$. Since $K \ne K'$, we have $K \cap K'=L$. Therefore, in Lemma~\ref{lem:exact},
  \begin{align*}
  Q_L&=\ker(H^1(L,\QQ/\ZZ)\to H^1(K,\QQ/\ZZ)\oplus H^1(K',\QQ/\ZZ))\\
  &=\ker(H^1(L,\QQ/\ZZ)\to H^1(K\cap K',\QQ/\ZZ))=0,
  \end{align*}
   so that $H^1(k,\Pic(\Ybar)) = 0$. Of
  course this implies $\Br(X) = \Br_0(X)$ by the short exact
  sequence~\eqref{eq:hochschild-serre}.

  \medskip

  \noindent\textbf{Case~\eqref{notsplitoverKc}.} In this case, $P(t)$ is irreducible over the Galois
  closure $K^\gc$ of the non-Galois extension $K/k$. The result then follows immediately from \cite[Theorem~2.2]{Wei11}.

  \medskip

  \noindent\textbf{Case~\eqref{S4}.} In this case, we know that $K^\gc
  \ne K\cdot L$ and that $P(t)$ is irreducible over $K$, but split
  over the Galois closure $K^\gc$, so $K\cdot L \subsetneq
  K^\gc$.  Since $[K\cdot L:k]=8$, we have
  $\Gal(K^\gc/k) \cong \mathcal{S}_4$. Again by
  \cite[Proposition~1]{MR687845}, the group
  $H^1(k,\Pic(\overline{T}^c))$ is trivial, so it is enough to prove
  the triviality of
  \begin{equation*}
    Q_L = \ker(H^1(L,\QQ/\ZZ)\to H^1(K \cdot L,\QQ/\ZZ)).
  \end{equation*}
  Non-triviality of this kernel would mean that there exists a
  non-trivial cyclic extension of $L$ contained in $K \cdot L$. But
  $\Gal(K^\gc/L) \cong \mathcal{A}_4$ since it has index $2$ in
  $\Gal(K^\gc/k) \cong \mathcal{S}_4$. Therefore, the Galois group of
  such an extension would be a normal subgroup of index $2$ or $4$ in
  $\mathcal{A}_4$, which does not exist. Therefore $Q_L = 0$ and
  $\Br(X^c) = \Br_0(X^c)$.

  \medskip

  \noindent\textbf{Cases~\eqref{Z2} and \eqref{Z4}.} In these cases, $K^\gc = K
  \cdot L$ has degree $8$ over $k$.  As in the part of
  case~\eqref{split} where $K/k$ is not Galois, we have $\Gal(K^\gc/k)
  \cong \mathcal{D}_4$, with $K/k$ not Galois and $L \subset K^\gc$.

  The group $\mathcal{D}_4$ has five subgroups of order $2$, exactly one of which is normal;
  and it has three normal subgroups of order $4$; moreover, any non-normal subgroup of order $2$
  is contained in exactly one normal subgroup of order $4$, the normal subgroup of order $2$ is contained in  all normal subgroups of order $4$. In the subfield lattice of $K^\gc/k$, we have the
  following intermediate fields, where we
  mark the normal extensions of $k$ by a box, and $K$ is any one of the
  non-normal extensions:
  \newcommand{\normal}[1]{*+<5pt>[F]{#1}}
  \begin{equation}\label{eq:diagram}
    \xymatrix@R=0.1in @C=0.1in{&&\normal{K^\gc}\ar@{-}[lld]
      \ar@{-}[ld]\ar@{-}[d]\ar@{-}[rd]\ar@{-}[rrd]\\
      K\ar@{-}[d] & K'\ar@{-}[ld] & \normal{K_0}\ar@{-}[lld]
      \ar@{-}[d]\ar@{-}[rrd] & K''\ar@{-}[rd] & K'''\ar@{-}[d]\\
      \normal{L'} \ar@{-}[rrd]&& \normal{L_0} \ar@{-}[d]
      && \normal{L''}\ar@{-}[lld]\\&& \normal{k}
    }
  \end{equation}
  This diagram shows that $L=L'$ in the non-Galois part of
  case~\eqref{split}, $L=L''$ in case~\eqref{Z2} with $\Gal(K^\gc/L)
  \cong \ZZ/2\ZZ \times \ZZ/2\ZZ$, and $L=L_0$ in case~\eqref{Z4} with
  $\Gal(K^\gc/L) \cong \ZZ/4\ZZ$.  In all three cases, $\Gal(K^\gc/k)
  \cong \mathcal{D}_4$ implies that $H^1(k,\Pic(\overline{T}^c)) = 0$,
  as we have seen in case~\eqref{split}.

  In the two cases $L=L''$ and $L=L_0$, the polynomial $P(t)$ remains
  irreducible in $K$, and hence we have $K \otimes_k L \cong K(\sqrt
  a) = K \cdot L=K^\gc$, so the group $Q_L \cong H^1(K^\gc/L,\QQ/\ZZ)$
  has order $4$.  Furthermore, $Q_k$ has order $2$ because the diagram
  shows that there is only one non-trivial cyclic extension of $k$
  contained in $K$, namely $L'$.  Since $L \cap K = k$, the
  restriction map $\Res_{k/L}$ sends $Q_k$ injectively into $Q_L$, and
  we conclude that $H^1(k, \Pic(\Ybar)) \cong \ZZ/2\ZZ$. Now the short
  exact sequence~(\ref{eq:hochschild-serre}) implies that the quotient
  $\Br(Y^c)/\Br_0(Y^c)$ -- and therefore also $\Br(X^c)/\Br_0(X^c)$ --
  injects into $\ZZ/2\ZZ$.

  By \cite[Remark on p.~76]{MR2053456}, we know that $\Br(Y)/\Br_0(Y)$
  is generated by the element $B = \Cor_{L/k}(t-\sqrt a, \chi) \in
  \Br(k(X))$, where $\chi \in Q_L$ with $\chi \not\in Q_k$.

  \medskip

  \noindent\textbf{Case~\eqref{Z2}.} For $L=L''$, we will show that $B$ is
  ramified at $t=\infty$. This implies that $\Br(X^c)=\Br_0(X^c)$.  In
  case~\eqref{Z4} with $L=L_0$, we will show below that $B$ extends to
  a non-trivial element of $\Br(X^c)$.

  We consider the quadratic extension $(K^\gc)^{\ker(\chi)}$ of $L$
  associated to $\chi$. If this were $K_0$, then this subfield of
  $K^\gc/L$ would come from the subfield $L'$ of $K/k$, so $\chi$
  would be trivial in $Q_L/\Res_{k/L}(Q_k)$, contrary to our choice of
  $\chi$. So we may assume that $(K^\gc)^{\ker(\chi)}=K''$ in
  diagram~(\ref{eq:diagram}).

  The variety $Y_{K''} = Y \times_k K''$ contains
  an open affine $K''$-subvariety $V$ defined by
  \begin{equation*}
    N_{K^\gc/K''}(\zz_1)N_{K^\gc/K''}(\zz_2)=c(t^2-a) \text{ and }t\neq 0.
  \end{equation*}
  Let $W$ be the smooth affine $K''$-variety defined by
  \begin{equation*}
    N_{K/K''}(\zz'_1)N_{K/K''}(\zz'_2)=c(1-at'^2).
  \end{equation*}
  The open subvariety of $W$ defined by $t'\neq 0$ is isomorphic to
  $V$ by the map $(t',\zz_1',\zz_2') \mapsto
  (1/t',\zz_1'/t',\zz_2)$. Let $D$ be the divisor of $W$ defined by
  $t'=0$.  It is easy to see that the divisor $D$ is geometrically
  irreducible. Hence $\kbar \cap K''(D)=K''$, where $K''(D)$ is
  the function field of $D$. The local ring $A_D\subset K''(Y)$
  associated to the divisor $D$ is a discrete valuation ring and
  $\ord_{A_D}(t)=-1$. We have $\kappa_{A_D}=K''(D)$.

  The natural restriction map $\Res_{k/K''}:\Br(Y)\rightarrow
  \Br(Y_{K''})$ factorizes as $\Res_{k/K''} =
  \Res_{L/K''}\circ\Res_{k/L}$. We will exchange restriction and
  corestriction using the double coset formula
  \cite[Proposition~I.1.5.6]{MR2392026}. The coset decomposition is $\Gal(\kbar/k) =
  \Gal(\kbar/L) \cup \sigma\Gal(\kbar/L)$ for some $\sigma \in
  \Gal(\kbar/k)$ that is non-trivial on $L$. Applying
  \cite[Proposition~1.1.3]{MR1285781} in the last step with
  $v_{A_D}(t\pm \sqrt a)=-1$, we have
  \begin{align*}
    \partial_{A_D}(\Res_{k/K''}(B))&=\partial_{A_D}(\Res_{L/K''}(\Res_{k/L}(\Cor_{L/k}(t-\sqrt a, \chi))))\\
    &=\partial_{A_D}((t-\sqrt{a},\Res_{L/K''}(\chi))+(t+\sqrt{a},\Res_{L/K''}(\chi^\sigma)))\\
    &=-\Res_{L/\kappa_{A_D}}(\chi+\chi^\sigma) \in \Hom(\Gal(\kappabar_{A_D}/\kappa_{A_D}),\QQ/\ZZ).
  \end{align*}

  By diagram~(\ref{eq:diagram}), $L'\cdot K''=K^\gc$.  Since $\chi$
  corresponds to $K''$, we know that $\chi^\sigma$ corresponds to its
  conjugate $K'''$, hence $\chi+\chi^\sigma$ corresponds to $K_0$.
  Since
   \begin{equation*}
     \kappa_{A_D}\cap K_0= K''(D)\cap \kbar \cap
     K_0=K''\cap K_0=L,
   \end{equation*}
   we have
   \begin{equation*}
     \Res_{L/\kappa_{A_D}}(\chi+\chi^\sigma) \neq 0
     \in \Hom(\kappabar_{A_D}/\kappa_{A_D},\QQ/\ZZ).
   \end{equation*}
   Therefore, $\Res_{k/K''}(B)\not \in \Br(Y_{K''}^c)$, hence $B\not \in
   \Br(X^c)$.

   \medskip

   \noindent\textbf{Case~\eqref{Z4}.} As discussed above in the
   context of case~\eqref{Z2}, it remains to show that $B=\Cor_{L/k}(t -
   \sqrt{a},\chi) \in \Br(k(X))$ extends to $\Br(X^c)$. We have
   $L=L_0$ in diagram~\eqref{eq:diagram}.

   It is sufficient to show that for any discrete valuation ring $A$
   of $k(X)$ that corresponds to a valuation that is trivial on $k$,
   with residue field $\kappa_A$, the residue map \cite[\S
   1.1]{MR1285781} applied to $B$ gives the zero map
  \begin{equation*}
    \partial_A(B) \in H^1(\kappa_A, \QQ/\ZZ) =
    \Hom(\Gal(\kappabar_A/\kappa_A), \QQ/\ZZ).
  \end{equation*}

  Let us therefore focus on proving the the triviality of
  $\partial_A(B)$ for any such discrete valuation ring $A$. Let $g \in
  \Gal(\kappabar_A/\kappa_A)$. We extend the embedding $k \subset
  \kappa_A$ to an embedding $\kbar \subset \kappabar_A$, so that $g$
  acts also on $\kbar$. Let $K^{\gc,g}$ be the subfield of $K^\gc$ fixed
  by $g$, with cyclic Galois group $\Gal(K^\gc/K^{\gc,g})$.

  Since $X$ is geometrically integral, $X \times_k K^\gc$ is an irreducible
  variety with function field $K^\gc(X)$.
  We can extend $A$ to a discrete valuation
  ring $A_{K^\gc}$ of $K^\gc(X)$ with residue field
  $\kappa_{A_{K^\gc}}=\kappa_A \cdot K^\gc$. Indeed, the completion of
  $k(X)$ for the given valuation is isomorphic to $\kappa_A((T))$,
  where $T$ is a uniformizer. The map $k(X)\to \kappa_A((T))$ gives
  the natural map $$K^\gc \otimes_k k(X) \to K^\gc \otimes_k
  \kappa_A((T)).$$ Composing with the natural map $K^\gc\otimes_k \kappa_A((T)) \to (K^\gc \cdot
  \kappa_A)((T))$, we have the map $$K^\gc \otimes_k k(X) \to (K^\gc \cdot
  \kappa_A)((T)).$$
Using
  $K^\gc(X) = K^\gc \otimes_k k(X)$, we get the map $$K^\gc(X) \to (K^\gc \cdot
  \kappa_A)((T))$$ that is injective since both objects
  are fields. We can see that $(K^\gc \cdot \kappa_A)((T))$ is a discrete
  valuation field and the valuation is given by the uniformizer $T$;
  the valuation restricted to $K^\gc(X)$ induces a discrete valuation of
  $K^\gc(X)$, and we denote the respective discrete valuation ring by
  $A_{K^\gc}$ and the residue field by $\kappa_{A_{K^\gc}}$.
  We have the induced map $A_{K^\gc}\to (K^\gc \cdot
  \kappa_A)[[T]]$. Then we have the injection $\kappa_{A_{K^\gc}}\to
  K^\gc \cdot \kappa_A$, since the residue field of $(K^\gc \cdot
  \kappa_A)[[T]]$ is $K^\gc \cdot \kappa_A$.  Since $k(X)$ is dense in
  $\kappa_A((T))$, we conclude that $K^\gc(X)$ is dense in $(K^\gc
  \cdot \kappa_A)((T))$, which implies that the map
  $\kappa_{A_{K^\gc}}\to K^\gc \cdot \kappa_A$ is
  surjective. Therefore the map $\kappa_{A_{K^\gc}}\to K^\gc \cdot
  \kappa_A$ is an isomorphism.

  For any intermediate field $M$ of $K^\gc/k$, we have similarly the
  valuation ring $A_M$ of $M(X)$ with residue field $\kappa_{A_M}$. We
  write $A_g$ for $A_{K^{\gc,g}}$.

  To show that $\partial_A(B)(g)$ is zero for any $g$, we distinguish
  two cases:
  \begin{enumerate}[(\ref{Z4}.i)]
  \item\label{it:easy} $L \not\subset K^{\gc,g}$,
  \item\label{it:hard} $L \subset K^{\gc,g}$.
  \end{enumerate}

  \medskip

  \noindent\textbf{Case~(\ref{Z4}.\ref{it:easy}).} We assume that $L
  \not\subset K^{\gc,g}$. We note that $\Gal(K^\gc/K^{\gc,g})$ is a
  cyclic subgroup of $\mathcal{D}_4$. We see in
  diagram~(\ref{eq:diagram}) that the only such subfields of $K^\gc$
  not containing $L=L_0$ are $K,K',K'',K'''$. In any case, $L\cdot
  K^{\gc,g} = K^\gc$.

  By \cite[Proposition~1.1.1]{MR1285781}, we have the commutative
  diagram
  \begin{equation*}
    \begin{CD}
      \Br(k(X)) @>\partial_A>> H^1(\kappa_A,\QQ/\ZZ)\\
      @V\Res_{k/K^{\gc,g}}VV @VV\Res_{\kappa_A/\kappa_{A_g}}V\\
      \Br(K^{\gc,g}(X)) @>\partial_{A_g}>> H^1(\kappa_{A_g},\QQ/\ZZ).
    \end{CD}
  \end{equation*}
  Since $\kappa_{A_g} = \kappa_A \cdot K^{\gc,g}$, we have $g \in
  \Gal(\kappabar_A/\kappa_{A_g})$. Hence
  \begin{equation*}
    \partial_A(B)(g) = \Res_{\kappa_A/\kappa_{A_g}}(\partial_A(B))(g)
    = \partial_{A_g}(\Res_{k/K^{\gc,g}}(B))(g).
  \end{equation*}

  By definition, we have $\Res_{k/K^{\gc,g}}(B) =
  \Res_{k/K^{\gc,g}}(\Cor_{L/k}(t-\sqrt a,\chi))$.  Since $L \cap K^{\gc,g}
  = k$, we have $\Gal(\kbar/k) = \Gal(\kbar/L)\cdot
  \Gal(\kbar/K^{\gc,g})$. Exchanging restriction and corestriction
  using \cite[Proposition~I.1.5.6]{MR2392026}
  gives
  \begin{equation*}
    \Res_{k/K^{\gc,g}}(B) =
    \Cor_{L\cdot K^{\gc,g}/K^{\gc,g}}(\Res_{L/L\cdot K^{\gc,g}}(t-\sqrt a, \chi)).
  \end{equation*}
  Since $L \cdot K^{\gc,g}=K^\gc$, the projection formula for cup products
  gives
  \begin{equation*}
    \Res_{L/L \cdot K^{\gc,g}}(t-\sqrt a, \chi) = (t-\sqrt a,\Res_{L/K^\gc}(\chi)).
  \end{equation*}
  But $\Res_{L/K^\gc}(\chi) = 0$ by definition of $Q_L$, hence
  $\Res_{k/K^{\gc,g}}(B) = 0$.

  \medskip

  \noindent\textbf{Case~(\ref{Z4}.\ref{it:hard}).} We assume that $L \subset
  K^{\gc,g}$. We work on $X_L = X \times_k L$. There are maps
  $X_{K^\gc} \to X_L \to X$. Let $A_L = A_g \cap L(X)$ be the
  corresponding discrete valuation ring on $L(X)$. Using the diagram
  \begin{equation*}
    \begin{CD}
      \Br(k(X)) @>\partial_A>> H^1(\kappa_A,\QQ/\ZZ)\\
      @V\Res_{k/L}VV @VV\Res_{\kappa_A/\kappa_{A_L}}V\\
      \Br(L(X)) @>\partial_{A_L}>> H^1(\kappa_{A_L}, \QQ/\ZZ)\\
      @V\Res_{L/K^{\gc,g}}VV @VV\Res_{\kappa_{A_L}/\kappa_{A_g}}V\\
      \Br(K^{\gc,g}(X)) @>\partial_{A_g}>> H^1(\kappa_{A_g}, \QQ/\ZZ)
    \end{CD}
  \end{equation*}
  we get the equality
  \begin{equation*}
    \partial_{A_g}(\Res_{k/K^{\gc,g}}(B)) =
    \Res_{\kappa_{A_L}/\kappa_{A_g}}(\partial_{A_L}(\Res_{k/L}(B))).
  \end{equation*}
  Thus it suffices to show that $\partial_{A_L}(\Res_{k/L}(B)) = 0$ (independent of $g$).

  We apply the double coset formula again: in this case, the coset decomposition
  reduces to $\Gal(\kbar/k) = \Gal(\kbar/L) \cup \sigma\Gal(\kbar/L)$, for any
  $\sigma \in \Gal(\kbar/k)$ that is non-trivial on $L$. Therefore,
  \begin{equation*}
    \Res_{k/L}(B) = (t-\sqrt a, \chi) + (t+\sqrt a, \chi^\sigma),
  \end{equation*}
  where $\chi^\sigma$ is obtained from $\chi$ via conjugation with
  $\sigma$.  We note that $X_L$ is given by $c(t-\sqrt a)(t+\sqrt a) =
  N_{K^\gc/L}(\zz)$ since $K^\gc = K \otimes_k L$. By
  \cite[Proposition~1.1.3]{MR1285781}, we have
  \begin{equation}\label{eq:residue}
    \partial_{A_L}(\Res_{k/L}(B)) =
    v_{A_L}(t-\sqrt a)\Res_{L/\kappa_{A_L}}(\chi)
    +v_{A_L}(t+\sqrt a) \Res_{L/\kappa_{A_L}}(\chi^{\sigma}).
  \end{equation}
  If $v_{A_L}(t-\sqrt a)=v_{A_L}(t+\sqrt a)=0$, then~(\ref{eq:residue})
  vanishes.

  If $v_{A_L}(t-\sqrt a) >0$, then $v_{A_L}(t+\sqrt a) = 0$ since $v_{A_L}(2\sqrt
  a)=0$. Therefore,
  \begin{equation*}
    v_{A_L}(t-\sqrt a) = v_{A_L}(t-\sqrt a)+v_{A_L}(t+\sqrt a) =
    v_{A_L}(N_{K^\gc/L}(\zz))-v_{A_L}(c).
  \end{equation*}
  Now $v_{A_L}(c) = 0$. Let $M = \kappa_{A_L}\cap K^\gc$.
  Then
   \begin{equation*}
    K^\gc \otimes_L M \cong \bigoplus_{i=1}^{[M:L]} K^\gc
  \end{equation*}
  since $K^\gc/L$ is Galois and $M$ is totally split in $K^\gc$.
   Since $A_L$ is a discrete valuation ring of $L(X)$, we have $\widehat{L(X)}\cong \kappa_{A_L}((T))$, where $T$ is a uniformizer.  Hence $$\widehat{L(X)} \cap K^\gc=\kappa_{A_L}\cap K^\gc=M.$$
   We have
  \begin{equation*}
    K^\gc \otimes_L \widehat{L(X)} \cong K^\gc \otimes_L M \otimes_M \widehat{L(X)}
    \cong \bigoplus_{i=1}^{[M:L]} K^\gc \otimes_M \widehat{L(X)}.
  \end{equation*} Therefore,
  \begin{align*}
    v_{A_L}(N_{K^\gc/L}(\zz)) &= v_{A_L}(N_{K^\gc \otimes_L \widehat{L(X)}/\widehat{L(X)}}(\zz))\\
    &=\sum_{i=1}^{[M:L]} v_{A_L}(N_{K^\gc \otimes_M \widehat{L(X)}/\widehat{L(X)}}(\zz_i))\\
    &=\sum_{i=1}^{[M:L]} [K^\gc\otimes_M \widehat{L(X)} : \widehat{L(X)}] \cdot v_{A_{K^\gc}}(\zz_i).
  \end{align*}
  This is a multiple of $[K^\gc:M]$.  Indeed, since $\widehat{L(X)}
  \cap K^\gc =M$, we know that $K^\gc\otimes_M \widehat{L(X)}$ is a
  unramified field extension over $\widehat{L(X)}$ of degree
  \begin{equation*}
    [K^\gc\otimes_M \widehat{L(X)} : \widehat{L(X)}] = [K^\gc:M],
  \end{equation*}
  and $v_{A_{K^\gc}}$ has values in $\ZZ$.  Since the order of
  $\Res_{L/\kappa_{A_L}}(\chi)$ divides $[K^\gc:M]$,
  (\ref{eq:residue}) vanishes. The case $v_{A_L}(t+\sqrt a) >0$ is
  analogous.

  If $v_{A_L}(t-\sqrt a) < 0$, then $v_{A_L}(t+\sqrt a) = v_{A_L}(t -
  \sqrt{a})$ since $v_{A_L}(2\sqrt{a}) = 0$. Since $\Gal(K^\gc/L) \cong
  \ZZ/4\ZZ$, it is easy to see that $\chi^\sigma =
  -\chi$. Hence~(\ref{eq:residue}) vanishes.  This completes the proof
  of $\partial_{A_L}(\Res_{k/L}(B))=0$ in case~(\ref{Z4}.\ref{it:hard}), and
  therefore the proof of the non-triviality of $\Br(X^c)$ in
  case~\eqref{Z4}.
\end{proof}

\begin{cor}\label{cor:quartic}
  Let $K/k$ be a quartic extension of number fields with $k =
  \QQ$. Let $P(t) \in \QQ[t]$ be an irreducible quadratic polynomial
  with splitting field $L$.

  If we are not in the case where $K/k$ is not Galois, $P(t)$ is
  irreducible over $K$ and $K \cdot L / k$ is Galois with $\Gal(K\cdot
  L/L) \cong \ZZ/4\ZZ$, then the Hasse principle and weak
  approximation hold on $X \subset \AA^5_\QQ$ defined by equation
  (\ref{eq:variety})
\end{cor}

\begin{remark}
  J.-L. Colliot-Th\'el\`ene suggested an alternative method to prove
  Theorem~\ref{thm:brauer} in case (\ref{split}), which we will sketch
  here. The proof of Theorem~\ref{thm:degree_4} shows that $X^c$ is
  birational to a smooth, projective $k$-variety $U^c$ equipped with a
  morphism $p: U^c \to S^c$ to a smooth, projective conic $S^c$ over $k$, in
  such a way that the generic fibre of $p$ is a smooth quadric
  $U^c_{k(S)}$ in $\mathbb{P}^4_{k(S)}$. One can then check that
  $\Br(U^c_{k(S)}) \cong \Br(k(S))$ and that $\Br(k)$ surjects onto
  $\Br(S^c)$. An explicit calculation with residues of elements of
  $\Br(U^c_{k(S)})$ then allows one to conclude that $\Br(k)$ surjects
  onto $\Br(U^c)$.
\end{remark}

\section{Failure of weak approximation}\label{sec:counterexample}

Finally, we give a concrete example of a Brauer-Manin obstruction to
weak approximation caused by the non-trivial element of
$\Br(X^c)/\Br_0(X^c)$ described in Theorem~\ref{thm:brauer}.

\begin{example}
  Let $K = \QQ(\sqrt[4]{17})$ and let $P(t) = t^2+1$. Let $X \subset
  \AA^5_\QQ$ be defined by $t^2+1 = N_{K/\QQ}(\zz)$. Then weak
  approximation does not hold on~$X$.

Indeed, consider the adelic point $(\xx_v) \in \prod_v X(k_v)$ given by
  \begin{equation*}
    \xx_v = (t_v, \zz_v) =
    \begin{cases}
      (0,1), & v \ne 17,\\
      (7,\zz_{17}), & v = 17,
    \end{cases}
  \end{equation*}
  where $\zz_{17} \in \QQ_{17}(\sqrt[4]{17})$ is a solution of
  $7^2+1=N_{\QQ_{17}(\sqrt[4]{17})/\QQ_{17}}(\zz_{17})$. Such a
  solution exists: $50$ is even a fourth power in $\QQ_{17}$ by
  Hensel's lemma, since $50 \equiv 2^4 \pmod{17}$.

  Let $\chi$ be a primitive character of the cyclic group
  $\Gal(K(i)/\QQ(i)) \cong \ZZ/4\ZZ$, regarded as element of
  $H^2(\QQ(i),\ZZ)$. Let $B = \Cor_{\QQ(i)/\QQ}(t - i,\chi)$. By
  Theorem~\ref{thm:brauer}, this defines a non-trivial element in
  $\Br(X^c)$. Since
  \begin{equation*}
    \sum_{v \in \Omega_\QQ} \inv_v(B((0,1))) = 0 \in \QQ/\ZZ,
  \end{equation*}
  it suffices to check that $\inv_{17}(B(\xx_{17})) \neq
  \inv_{17}(B((0,1)))$. To do this concrete calculation, one needs to
  fix an embedding of $\QQ(i)$ into $\QQ_{17}$.  Let $\alpha \in
  \mathbb{Q}_{17}$ be such that $\alpha^2 + 1 = 0$ and $\alpha \equiv
  4 \pmod{17}$. Consider the embedding $\iota_\alpha: \QQ(i) \to
  \QQ_{17}$ given by $i \mapsto \alpha$. By functoriality, this gives
  an element $\chi_{\alpha} \in H^2(\QQ_{17},\ZZ)$. The cup products
  \begin{equation*}
 \QQ(i)^* \times H^2(\QQ(i),\ZZ) \to \Br(\QQ(i))
\end{equation*}
and
\begin{equation*}
\QQ_{17}^* \times H^2(\QQ_{17},\ZZ) \to \Br(\QQ_{17})
\end{equation*}
are compatible with the functorial maps induced by $\iota_\alpha$. If
$\chi_{-\alpha}$ is the image of $\chi$ via the embedding $\QQ(i) \to
\QQ_{17}$ defined by $i \mapsto -\alpha$, then one easily checks that
$\chi_{-\alpha} = -\chi_{\alpha} \in H^2(\QQ_{17},\ZZ)$. Hence we get
the equalities
\begin{equation*}
\begin{aligned}
  &B(\xx_{17}) - B((0,1))\\
  &= \Res_{\QQ/\QQ_{17}}(\Cor_{\QQ(i)/\QQ}(t-i,\chi))(\xx_{17}) -
  \Res_{\QQ/\QQ_{17}}(\Cor_{\QQ(i)/\QQ}(t - i,\chi))((0,1))\\
  & = (7 - \alpha,\chi_\alpha) + (7 + \alpha,-\chi_\alpha) -
  (-\alpha,\chi_\alpha) - (\alpha,-\chi_\alpha) \\ & = (7 -
  \alpha,\chi_\alpha) + ((7 + \alpha)^{-1},\chi_\alpha) -
  (-\alpha,\chi_\alpha) - (\alpha^{-1},\chi_\alpha) \\ & =
  \left(\frac{7 - \alpha}{7 +
      \alpha},\chi_\alpha\right)-(-1,\chi_\alpha) \\ & = \left((7 -
    \alpha)^2,\chi_\alpha\right)
\end{aligned}
\end{equation*}
in which we used the double coset formula and the bilinearity of the
cup product, together with the fact that $(7 - \alpha)(7 + \alpha) =
50$ and $-1$ are fourth powers in $\QQ_{17}$. Now we only need to
check that $(7 - \alpha)^2$ is not a norm for the extension
$\QQ_{17}(\sqrt[4]{17})/\QQ_{17}$, but this is clear since $(7 - 4)^2
= 9$ is not a fourth power modulo $17$.
\end{example}

\end{document}